\def\numberbysection{\@addtoreset{equation}{section}
         \renewcommand{\theequation}{\thesection.\arabic{equation}}}
\def\subsubsection{\@startsection{subsubsection}{3}%
  \normalparindent{.5\linespacing\@plus.7\linespacing}{-.5em}%
  {\normalfont\bfseries}}
\newcounter{lstcounter}
\newtheorem{introthm}{Theorem}
\newtheorem{thm}{Theorem}[section]
\newtheorem{lem}[thm]{Lemma}
\newtheorem{prop}[thm]{Proposition}
\newtheorem{cor}[thm]{Corollary}
\theoremstyle{definition}
\newtheorem{rmk}[thm]{Remark}
\newtheorem{ex}[thm]{Example}
\def\nn{\nonumber}
\def\Z{\mathbb{Z}}
\def\ra{\rightarrow}
\def\del{\partial}
\def\D{\Delta}
\def\t{\tau}
\def\a{\alpha}
\def\g{\gamma}
\def\del{\partial}
\def\a{\alpha}
\def\g{\gamma}
\def\D{\Delta}
\def\eps{\epsilon}
\def\t{\tau}
\def\nn{\nonumber}
\def\la{\langle}
\def\ra{\rangle}
\def\sign{\mathrm{sign}}
\def\Hom{\mathrm{Hom}}
\def\sign{sign}
\def\Arc{\mathcal{A}rc}
\def\A{\mathcal{A}}
\def\Diioarc{\overline{\Arc}^{i\leftrightarrow o}}
\def\ioarc{\mathcal{A}rc^{i\leftrightarrow o}}
\def\Loop{\mathcal{L}oop}
\def\mk{\mathrm{mk}}
\def\sign{{sign}}
\def\mk{mk}
\def\Loop{\mathcal{L}oop}
\def\Arc{\mathcal{A}rc}
\def\odo{\otimes \cdots \otimes}
\def\ot{\otimes}
\def\kdk{,\dots,}
\def\bt{\mathbf{t}}
\def\Cpp{C^{(2)}}
\def\Cp{C^{(1)}}
\def\sw{^{(1)}}
\def\sww{^{(2)}}
\def\CA{\Omega}
\newcommand{\leftsub}[2]{{\vphantom{#2}}_{#1}{#2}}
\newcommand{\bisub}[3]{{\vphantom{#2}}_{#1}{#2}_{#3}}
\def\N{\mathbb{N}}
\def\R{\mathbb{R}}
\def\wt{\mathrm{wt}}
\def\CF{\mathcal{F}}
\let\@wraptoccontribs\wraptoccontribs
\begin{document}

\title[Hochschild actions]
{A detailed look on actions on Hochschild complexes especially the degree $1$ coproduct and  actions on loop spaces\\
}

\author
[R.\ M.\ Kaufmann]{Ralph M.\ Kaufmann}
\email{rkaufman@purdue.edu}
\address{Purdue University. Department of Mathematics. 150 N. University St., West Lafayette, IN 47907, USA.}
\date{ \today }

\begin{abstract}
We explain our previous results about Hochschild actions \cite{hoch1,hoch2} pertaining in particular to the coproduct which appeared in a different form  in \cite{GH} and provide a fresh look at the results.
We  recall the general action, specialize to the aforementioned coproduct and prove that the assumption of commutativity, made for convenience in \cite{hoch2}, is not needed. We give detailed background material on loop spaces, Hochschild complexes and dualizations, and  discuss details and extensions of these techniques which work for all operations of \cite{hoch1,hoch2}.

With respect to loop spaces, we show that the co--product is well defined modulo constant loops and going one step further that in the case of a graded Gorenstein Frobenius algebra, the co--product is well defined on the reduced normalized Hochschild complex.

 We   discuss several other aspects such as  ``time reversal'' duality  and several homotopies of operations induced by it. This provides a cohomology operation which is a homotopy of the anti--symmetrization of the coproduct. The obstruction again vanishes on the reduced normalized Hochschild complex if the Frobenius algebra is graded Gorenstein.
 Further structures such as  ``animation'' and  the BV structure and a coloring for operations on chains and cochains and a Gerstenhaber double bracket are briefly treated.

\end{abstract}

\maketitle


\section*{Introduction} In \cite{hoch1,hoch2}(arXiv  June 2006), we gave an action of a dg PROP of cellular chains of a CW complex, based on arc systems, on the Hochschild Cochain complex of a Frobenius algebra, algebraically realizing  and expanding  the Chas--Sullivan string topology \cite{CS} operations.
Among many other operations, this includes  a product of degree $0$, a coproduct of degree $1$ and a pre--Lie operation of degree $1$ whose cellular representatives together with the computation of composition of product and coproduct appear in \cite[Figure 4, Figure 5]{hoch1}.
The  action of the open version of the degree $1$ coproduct  was explicitly given the generalization to the open/closed context in \cite[\S 5.4.2]{ochoch}.

Such a coproduct was constructed by Goresky--Hingston \cite{GH} in a geometric setting and has found its way into  a symplectic setting \cite{Oancea}, see also \cite{NaefWill,RivWang2} for further developments after the announcement of our results presented here.
There are  precedents for such operations going back to the   basic string topology  \cite{CS} with further clarifications and developments in \cite{Sopenclosed,Ssigma,Sullivanoverview}.
For the coproduct to descend to homology of the loop space, one has to work relative to constant loops. This idea can be traced back to Sulllivan \cite{Sullconv}.
The vanishing of the obstruction for descent to cohomology has geometric meaning and has been used to
distinguish homotopy equivalent non diffeomorphic manifolds \cite{Sbasu}.

We will recall our coproduct, explain the background and give the geometric interpretations and show that the geometry of the product of \cite{GH} agrees with the our previously defined algebraic version  using the cosimplicial setup of \cite{jones,CJ}.
 Specifically, the relevant cellular chain complex whose cellular chains act as a dg PROP
on the Hochschild chain complex $CH^*(A,A)$ of a Frobenius algebra $A$ was defined in \cite[Definition 5.31]{hoch1}.
The action of cells was proven in \cite[Theorom B]{hoch2}.
Specializing to the case of $A=H^*(M)$, with $M$ a compact simply connected manifold, we make  the action
 explicit.
We show that using translation,  provided by \cite{jones,CJ}, the coproduct geometry agrees with that of \cite{GH} on the $E_1$--page. This matches with the original foliation
 geometry for the whole gamut of operations which goes back to
 \cite[\S4]{KLP}, see also \cite[\S5.11]{woods}
 and \cite[\S1 esp.\ Figure 1]{KP}.

In the course of this discussion, we give many details for the calculations and interpretations as well as generalizations that are universal and useful for other operations contained in the PROP and the algebras over it.
We prove that  the assumption of commutativity for the Frobenius algebra, made out of convenience in \cite{hoch2}, is not needed, by showing that all the equations that need to be satisfied for the action to be well defined and independent of choices hold for a general associative Frobenius algebra. This also yields a succinct formula for correlation functions in terms of 2d  OTFT correlation functions. Lastly, we consider restricting
 the PROP to operations which are already defined for associative algebras.

There are several stages to actions. The first is to simply define individual operations, the next is to give compatible operations, that is an operad, PROP or modular operad structure, and  last stage provides dg--actions. In  \cite{hoch1,hoch2} we provided modular operad actions for cell complexes from moduli spaces and dg--PROP actions for Sullivan--type surfaces yielding the dg--operations under discussion. Iterated k--fold operations, as considered also already appear in our  framework and are readily treated using our formalism.
As we prove below, if one restricts to the reduced Hochschild complex, one automatically discards constant loops and hence the results of \cite{HW} follow algebraically from our formalism.
We also discuss different methods for lifting the operations from the Frobenius algebra level to a chain level, e.g.\ from $H^*(M)$ to $C^*(M)$ and $C_*(M)$.

The individual operations inherently have a na\"ive duality in virtue of being defined as correlation functions given by switching input/output designations.
For instance, the degree $0$ product is dual to a degree $0$ co--product, which is different from the natural degree $1$ product.
However, there is PROPic ``time reversal symmetry'', basically rooted the asymmetric treatment of ``in'' and ``out'' boundaries. The prime example of being related by this symmetry are the degree $0$ product and the degree $1$ coproduct. The symmetric partners are obtained from the same underlying arc system, but differ by switiching the ``in'' and ``out'' boundaries.
 Moreover, there is  an in/out  symmetric, that is modular operad, theory used for moduli space operations \cite{hoch1,hoch2} in which this symmetry is natural. The string topology operations are recored by  applying degeneracy maps to ``outs''. This asymmetry is needed to obtain the correct dg-operations, cf.\ \cite{hoch2}.

Just as Gerstenhaber's bracket comes from the homotopy of $\cup$ and $\cup^{op}$, so too can the coproducts, as well many other operations, be  identified as operations from  homotopies  ---a point stressed by B.\ Tsygan. Our formalism also naturally identifies such homotopies for instance the co--product, its anti--symmetrization, the Gerstenhaber double bracket and with extra decorations the BV structure in the setting of ``animation'' \cite{tsyganbook}.

Finally, to incorporate the extra dualities from the na\"ive duality operadically, or better PROPicly, we introduce a two colored PROP to keep track of extra dualization which specifies $co$ (cohomological) and $ho$ (homological) inputs and outputs. In the actions this gives operations on Hochschild chains and cochains.
This subsumes the operations of \cite{RivWang} into  the correlation function formalism of \cite{hoch2}.
The details of these computations are consigned to \cite{KWang}, where, in particular, we show that the mixed $m_3$ operattions of \cite{RivWang} stem from a natural homotopy which is a double Gerstenhaber bibracket of degree $2$ in the sense of \cite{vdBbracket,turaevbracket}.

\subsection*{Organization}
The paper is organized in a formula forward way, first giving the algebraic formulas and then going deeper into their  origin which at the lowest level is rooted in the cell geometry.

 After fixing notation and giving essential remarks in
\S\ref{intropar}, we give the formula for the co--product and its boundary in \S\ref{coprodpar}, which is based on the cell   \cite[Figure 4]{hoch1} with
action according to \cite{hoch2} in Theorem \ref{coprodthm}.
With the explicit form of the action, one can see in which ways this (co)-chain operation descends to an operation in (co)homology.
This is made explicit in Proposition \ref{factorprop} and  Theorem \ref{bdactionthm}. The technical discussion on how to define the correlation functions and dualize them is contained in \S\ref{copar}.
The application to the coproduct is in \S\ref{CHcorpar} and a discussion of generalizations of the particular actions follows in \S\ref{gendisccopar}.

In \S\ref{looppar}, we review the Hochschild chain and cochain models for loop space according to \cite{jones,CJ}. This allows to complete the geometric identification of the action, and hence the coproduct, in the case of loop spaces.
We identify the constant loops with $\overline{CH}^0(C^*(M),C_*(M))$ in Proposition \ref{constloopprop} and  which allows us to deduce that the coproduct is well defined on $\bar H_*(LM)$ in  Corollary \ref{constloopcor}.
We also discuss several ways of regarding the operations we defined in other natural contexts in \S\ref{interpretpar}.
In \S\ref{geocoprodpar} and \S\ref{bdgeopar} we  give the geometry of the coproduct and its boundary terms.

 The geometry of the CW complexes and dg action of the cellular chains is discussed in \S\ref{cellpar}, wheres we also  succinctly define the action in terms of  local OTFT correctors.
 The concrete calculations are performed in \S\ref{calculationspar}. This contains the various dualizations and relaxations for the conditions of existence of the basic operations (\S\ref{calcpar}) and the proof the that commutativity assumption is superfluous; see Corollary  \ref{removeassumptioncor}, which also contains an explicit formula for the local OTFT correlators.

Several dualities are defined in  \S\ref{dualitypar}, in particular the na\"ive and ``time  reversal symmetry'', which bridges the different treatment of inputs and outputs in the actions and string topology. Further topics, such as  dualization, $A_\infty$ versions and ``animation'' are briefly discussed in \S\ref{furtherpar}. Finally, \S\ref{KWangpar} contains  a preview of the upgrading of the na\"ive duality into a colored action on Hochschild chain, cochains and the Hochschild-Tate complex and the Gerstenhaber double bracket. The full details are
relegated to \cite{KWang}.

\subsection*{Acknowledgement}
We thank Gabriel Drummond-Cole who brought the GH-Coproduct to our attention during a visit to the IBS center for Geometry and Physics. We also thank him for the ensuing discussions. We furthermore thank Alexandru Oancea for his enlightening talk on the subject and Muriel Livernet for bringing me to that talk and encouraging me further to write this exposition. It is a pleasure to thank D.\ Sullivan for his comments and remarks.
We also thank the IHES, where this note was written and the Simons Foundation for its support.
We thank M.\ Rivera and W.\ Zhang for discussions on the action on Hochschild chains and Boris Tsygan for his interest and questions about homotopies and ``animation".
Finally, we would like to thank the referee for the careful reading of the manuscript, the useful comments to improve the exposition and questions about further ramifications.

\section{ Preliminary Remarks and Notations}
\label{intropar}
\subsection{Removing assumptions}
 In  \cite{hoch1,hoch2}, we  used the notation $k$ for the coefficients thinking about fields. This made life easier, due to
the K\"unneth formula.  However, we can take $\Z$ coefficients throughout. In order to not confuse with the references, we  set $k=\Z$. This also conforms to the notation of \cite{Loday}.

In \cite{hoch2} commutativity of $A$ was assumed, see Assumption 4.1.2 of {\em loc.\ cit.}. This is not necessary as  had  been announced and detailed in several talks and discussions over the  years. Here we write out the proof. Indeed all the needed equations, cf.\ \cite[Remark 4.2]{hoch2} hold for any Frobenius algebra.   This follows from a direct verification by calculation, which is done in \S\ref{OTFTpar} and the resulting expression is \eqref{asscoreq}.
With hindsight, it also  follows  from the well--definedness of 2d Open Topological Field Theory (OTFT) and the equivalence of OTFTs with Frobenius algebras,  see Remark \ref{OTFTrmk}.

\subsection{Notation for the various complexes}
For an  $A$-$A$ bimodule $M$, we let $CH_*(A,M)$, $HH_*(A,M)$ be the Hochschild chain complex and homology and set $CH_*(A):=CH_*(A,A)$, $HH_*(A)=HH_*(A,A)$. Thus, $CH_n(A)= A^{\ot n+1}$.  For $A=1\oplus \bar A$, where $k$ is generated by the unit, the normlized complex is $\overline{CH}_n(A,M)=M\ot \bar A^{\ot n}$.

Dually, $CH^n(A,M)=Hom(A^{\otimes n},M)=M\otimes \check{A}^{\otimes n}$ denotes Hochschild co-chains and $HH^*$ Hochschild cohomology.
 An element $f\in CH^n(A, A)$ is a function $f:A^{\otimes n}\to A$. We use the short hand $CH^*(A)=CH^*(A,\check A)$ and $ HH^*(A)=HH^*(A,\check A)$.
The normalized cochains $\overline{CH}^*(A,M)$ are those functions $f(a_1 \odo a_n)$ which vanish if one of the $a_i=1$.

If $M=A$ then $CH^n(A,A)=A\otimes \check{A}^{\otimes n}$, and if  $M=\check{A}$ then
$:CH^n(A):=CH^n(A,\check{A})\simeq \check{A}^{\otimes n+1}\simeq Hom(A^{\ot n+1},k)$.
 In particular, as complexes $CH^*(A)=Hom(CH_*(A),k)$, see \cite[1.5.5]{Loday}.
 See \cite[Lemma 3.5]{hoch2}, and \cite[2.5.9]{Loday} for the relation to the cyclic complex and cyclic cohomology.

 The reduced Hochschild complex is defined as $\widetilde {CH}_n(A)=\overline {CH}_n(A)= A\ot \bar A^{\ot n}, n>0$ and $\widetilde {CH}_0=\bar A$.  Its homology is denoted by $\widetilde{HH}_n(A)$.
 The reduced complex $\widetilde{CH}^*(A)$ is the dual to $\widetilde{CH}_*(A)$ and is also the normalized complex modulo the constants in $\overline{CH}^0(A)$.

If $A$ is Frobenius, then  $A^{\ot n+1}\simeq CH_n(A)\simeq CH_n(A,\check A)\simeq \check A^{\ot n}
\simeq CH^n(A,\check A)\simeq CH^n(A)$, see \S\ref{copar} for more details. The duality $A\simeq \check A$
 extends to the duality between $CH_*(A)$ and $CH^*(A)$ as complexes.

 We will call graded algebra $A$ of finite type if all the graded pieces are finite dimensional. In this case, we consider $\check A$ as the graded dual.

\subsection{ Levels of action}
\label{threepar}

There are three levels to the actions of \cite{hoch1,hoch2}:
\subsubsection{Dg--PROP action on on $CH^*(A,A)$ for a Frobenius algebra $A$.}
This was established in \cite[Theorem B]{hoch2}. The definition of the action
 uses that linearly $CH^*(A)$ is  isomorphic to the  reduced tensor algebra $\bar TA$ on $A$.

 The operations are  defined   via correlations functions, which are morphisms $Y:(\bar TA)^{\ot n}\to k$.
 These dualize to the dg--PROP action as  detailed in \S\ref{copar}. This entails specifying inputs and outputs which yields a morphism in $Hom(CH^{\ot n_1},CH^{\ot n_2})$, for a specification of $n_1$ inputs and $n_2$ outputs with $n_1+n_2=n$.
 These are  compatible with the differentials and give a dg--PROP action if the input/output designation is the one specified by the cell model.   The asymmetric treatment of inputs and outputs gives rise to two types of duality, a na\"ive one which works on the level of operations ---allowing to assign inputs and outputs in the operations arbitrarily---- and a time reversal duality, see \S\ref{dualitypar}.

 Since the two complexes $CH^*(A)$ and $CH_*(A)$ are duals, we can furthermore identify the complexes:
 $Hom(CH^{\ot n_1},CH^{\ot n_2})\simeq  (CH^*)^{\ot n_1}\ot (CH_*)^{\ot n_2}$.
 This allows one to  dualize $CH^*$  outputs, as specified by the cell, as  $CH_*$ inputs and likewise dualize factors of $CH^*$, which are inputs according to the cell marking, to $CH_*$ outputs, augmenting the na\"ive duality.
  Structurally this is  handled by a two colored PROP, which we  introduce in \S\ref{KWangpar} ---more details and examples will be given in  \cite{KWang}.

 \subsubsection{PROP actions on $CH^*(D,\check D)$, for $D$  a quasi Frobenius algebra and a lift of the Casimir aka.\ diagonal.}
 A quasi Frobenius algebra, see  \cite[Definition 2.7]{hoch2} is a unital associative  dg algebra $(D,d)$ with a trace $\int$, i.e.\ a cyclically invariant counit, such that $\int da=0$ and $A=(H(D,d),\int_H)$ is Frobenius. In \cite[Theorem A and B]{hoch2}, we lifted the cochain operations to the cocycles of such a dg--algebra using a lift of the Casimir from $H$ to $A$.
 The prototypical example is $D=C^*(M)$, for a compact simply connected manifold $M$, with the lift being a choice of s lift of the diagonal. The cocycle condition was introduced to avoid the ambiguity introduced by the  choice of lift. This also means that the induced operations on cohomology are well defined and independent of the lift. However, fixing a lift  it  is clear that the operadic correlation functions \cite[\S2, \S2.3]{hoch2}, actually lift to all of $D$, that is to $C^*(D)\simeq CH^*(D,\check {D})$.

One  has to be careful with the  dualizations if $D$ is not finite dimensional or of finite type. In the case of $D=C^*(M)$ the (degree shifting) quasi isomorphism $CH^*(C^*(M),C^*(M))\simeq CH^{*+d}(C^*(M),C_*(M))$ was established in \cite{CJ}. The double complex gives rise to a spectral sequence, whose $E^1$--term is isomorphic to $CH^*(H^*(M),H^*(M))$ and the action via correlation functions \ gives a  PROP action on $CH^*(C^*(M),C^*(M)$ which induces the dg action on $E^1$.
The discussion pertaining to the coproduct are in \S\ref{gendisccopar} and \S\ref{interpretpar}.

\subsubsection{Subsets of operations which do not need dualization}
Finally, upon inspection of operations or sub--PROPs operads,  dualizations may not be necessary.
As remarked, e.g.\ in \cite[\S4]{hoch2}
 this is the case for the suboperad action yielding Deligne's conjecture \cite{del}.
  More details are given in \S\ref{hompar} and
\S \ref{gendisccopar}, specific, relevant examples are in \S\ref{calcpar}, while general background is discussed briefly in \S\ref{standarddecomppar} and \S\ref{calcpar}.

\subsection{Frobenius algebras}
\label{frobpar}
A Frobenius algebra $A$ is an  associative, unital (possibly $\Z/2\Z$ graded) algebra over a commutative ring $k$, with a non--degenerate even symmetric perfect pairing $\eta$, commonly written as $\la\,,\ra$ which is invariant, that is
$\la a,bc\ra=\la ab,c \ra$.

$A\ot A$  is again a Frobenius algebra with the usual multiplication $(a\ot b)(c\ot d)=ac\ot bd$,
 and  $\la a\ot b,c\ot d\ra=\la a,c\ra\la b,d\ra$ as the perfect even symmetric invariant paring.
 Here and often in the following, for simplicity, we omitted appropriate Koszul sign stemming from the use of the commutator $\t_{23}$, or simply add a $\pm$ sign.
 There are several schemes for sign conventions for operations  discussed at length in \cite{del,hoch2}, see \S\ref{cellorderpar} and \S\ref{standardpar} for the sign convention for operations.

Using these pairings, $\mu$ has an adjoint $\Delta_A$ defined by
$\la \Delta_A( a),b\otimes c\ra= \la a, bc\ra$.
The pairing $\eta$ defines a counit for this comultiplication $\eps$ via $\eps(a)=\la 1,a \ra=\la a,1\ra$. Alternate notations in use  are $\la a \ra:=\eps(a)=:\int a$. In this notation: $\la a,b\ra=\la ab \ra$.
If $A$ is not commutative, $\D_A$ is not cocommutative in general.
 The relationship between $\mu_A$ and $\Delta_A$ in this convention is:
 \begin{equation}
 \label{frobeq}
 \Delta_A(ab)=\D_A(a)(b\ot 1)=(1\ot a)\D_A(b)
 \end{equation}
 $\la\D_A(ab),c\ot  d\ra=\la ab,cd\ra=\la a,bcd\ra =\la \D_A(a), bc\ot d\ra=\la \D_A(a), (b\ot 1)(c\ot d)\ra=\la \D_A(a)(b\ot 1),c\ot d\ra$.
 As
 $\Delta_A(a)=\Delta_A(a 1)=\Delta_A(1 a)$:
\begin{equation}
\label{deltaeq}
\Delta_A(a)=\D_A(1)(a\ot 1)=(1\ot a)\D_A(1)
\end{equation}
The element $e=\mu\Delta(1)$, called the Euler element, will play an important role. The quantum dimension of $A$ is $tr(id_A)=\eps(e)$.

We set $\Delta_A(1)=:C=\sum C^{(1)}\ot C^{(2)}\in A\ot A$ and call it the Casimir element. We will use Sweedler notation throughout. In particular:
\begin{equation}
\label{coeffeq}
a=\sum \la a,\Cp\ra\Cpp=\sum \la \Cp, a\ra \Cpp
\end{equation}
Explicitly, if $A$ is free and  $e_i$  is a basis for $A$, $g_{ij}=\la e_i,e_j \ra$ and $g^{ij}$ is the inverse matrix then
$C=\sum_{i,j}  e_i \otimes e_j=\sum_i e_i\otimes e^i$ with $e^i=\sum_j e_j$ and
$e=\mu_A\circ \Delta_A(1)=\mu_A(C)=\sum_{ij}e_ie_j$.

$A$ isomorphic to its dual $\check A=Hom(A,k)$ via $a\mapsto \la a, \cdot \ra$.
Via this duality $\eta\in \check A^{\otimes 2}$ is dual to $C$. The Casimir element
$C$ allows to express the dual perfect paring on $\check \eta$ on $\check A$ via $\check\eta(\phi,\psi)=(\phi\otimes \psi)(C)$.
As usual $\Delta_A$  defines a multiplication $\mu_{\check A}$ on $\check A$ via
$(\phi\psi)(a)=(\phi\otimes \psi)(\Delta_A(a)$) and $\mu_A$ a comultiplication $\Delta_{\check A}(\phi)(a\ot b)=\phi(ab)$.

\subsubsection{Geometric/Gorenstein $A$}
\label{geosec}
In case case that $A=H^*(X)$ for a compact oriented connected $d$ dimensional Poincar\'e duality space $X$, or more generally
if $A$ is  graded Gorenstein with
socle $d$, we  set $e_0=1$ and $e_{\it top}$ the unique degree $d$ element with $\eps(e_{\it top})=1$.
In this case $e=sdim(A)e_{\it top}$, where $sdim$ is the super or $\Z/2\Z$ dimension.

In particular for $A=H^*(M)$ for a compact oriented $M$ and $\eps= \int_{[M]} . = \eps_{aug}\circ . \, \cap [M]$, where $\eps_{aug}$ is the augmentation map, then $e$ is the Euler class, $1$ is Poincar\'e dual to the fundamental class of $[M]$ and $e_{top}$ is Poincar\'e dual to a point, $\eps(e)=\chi(M)$ is the Euler-characteristic and $e=\eps(e)e_{top}$.

\section{The algebraic formula for the coproduct and it boundary}
\label{coprodpar}
The coproduct is defined by the action of a particular cell, which was already given in \cite[Figure 4]{hoch1}. It is depicted in Figure \ref{cellfig}.
We will state the algebraic result and then give the derivation of the  explicit formula for the operation
from the general setting of \cite{hoch2}. We will use the short hand $CH=CH^*(A,A)$.

 \begin{figure}
\includegraphics[align=c, width=.6\textwidth]{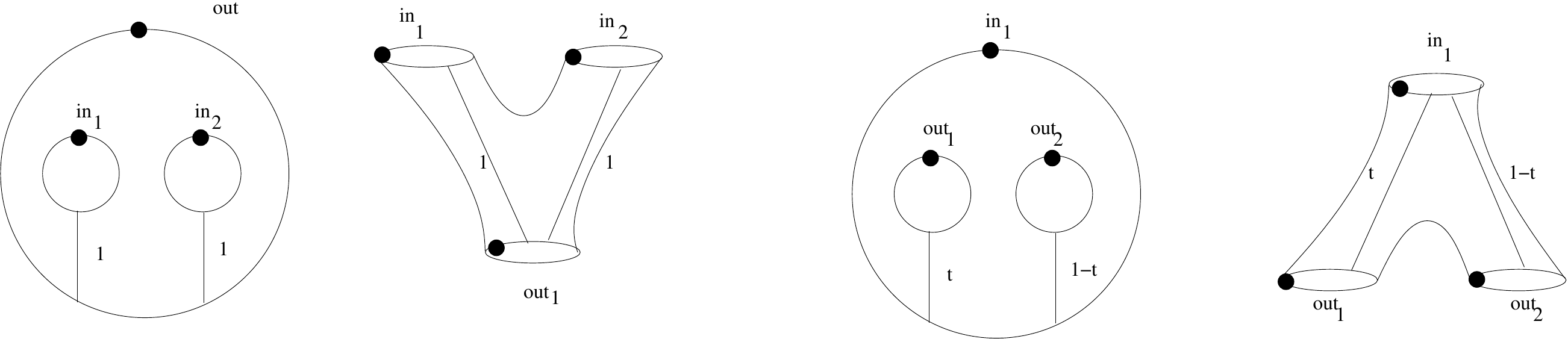} \hskip .2in
\includegraphics[align=c, width=.3\textwidth]{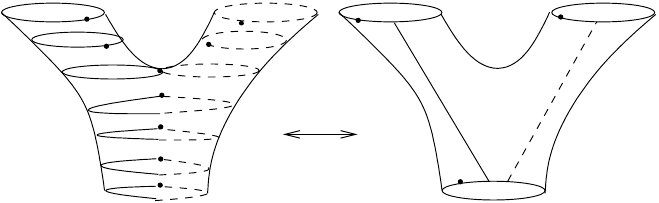}
\caption{\label{cellfig} The cell for the product (left) and coproduct (middle). The weights in the product case are both normalized to one, since each arc is incident to one input boundary. Asymmetrically, for the coproduct both arcs are incident to a single input boundary, so that only their combined weight is normalized to $1$, yielding a cell of dimension $1$. These operations are time reversal dual to each other. In the string picture (right) of \cite{woods} the two strings merge for the product moving down and moving up for the coproduct one string forms a figure 8 and breaks apart into two strings with the relative lengths $t$ and $1-t$. Each arc represents a piece of string}
\end{figure}

\subsection{The coproduct on $CH^*$}

\begin{thm}
\label{coprodthm}
Given a Frobenius algebra $A$ consider $CH:=CH^*(A, A)$.
The cell for the coproduct given in Figure \ref{cellfig}  acts, according to \cite[\S3.2.1]{hoch2}, as a coproduct morphism
\begin{equation}
\label{DeltaCHeq}
 \Delta_{CH}\in Hom(CH, CH^{\otimes 2})
 \end{equation}
  The formulas for its  non--zero components
$\Delta_{CH}(f)\in \bigoplus_{p+q=n-1} CH^p\otimes CH^q$ are explicitly given by
\begin{equation}
\begin{aligned}
\label{CHcoprodeq}
&\Delta_{CH}(f)[(a_1\odo a_p)\ot (a_{p+1}\odo a_{n-1})]\\
&= (-1)^p\sum_{C_1,C_2}\Cp_1 f(a_1\odo a_{p}\ot\Cp_2\Cpp_1 \ot a_{p+1}\odo a_{n-1})\ot \Cpp_2
\end{aligned}
\end{equation}

According to \cite[Theorem B]{hoch2} the boundary of this chain operation is given by the operation of the boundary of the cell, given in Figure \ref{boundaryfig}. It has two components and the operations corresponding to these are
\begin{equation}
 \del_0\Delta_{CH}:CH^n\to  CH^n\otimes CH^0 \mbox{ and } \del_1\Delta_{CH}: CH^n\to CH^0\otimes CH^n\nn
\end{equation}
which are given by the following explicit formulas: using $CH^0(A,A)=Hom(k, A)$, choosing $a_i\in A$ and $\lambda \in k$
\begin{equation}
\begin{aligned}
\label{boundareq}
&\del_0\Delta_{CH}(f)(\lambda\otimes (d_1\odo d_n))
=\lambda (1\otimes f(a_1,\dots,a_n)) \Delta(1)^2\\
&\del_1\Delta_{CH}(f)((a_1\odo a_n)\otimes \lambda)
=\lambda  \Delta(1)^2 (f(a_1,\dots,a_n)\ot 1)
\end{aligned}
\end{equation}

\label{constcor}
Furthermore,
the coproduct is well defined as a cohomology operation if  $\Delta(1)^2=0$. It is also a well defined cohomology operation
modulo the ``constant term'' $CH^0$ or relative to the constant term.

\end{thm}
\begin{proof}
The proof is in \S\ref{proof}, which also contains equivalent forms of the boundary operation involving $\D_A(f)$, see \eqref{bdopdfeq}.
\end{proof}
\subsubsection{Geometric/Gorenstein case}
\begin{lem}
\label{deltasqlem}
Let $A$ be graded Gorenstein---in particular this is the case if $A=H^*(X)$ for a connected Poincar\'e duality space $X$.
Then, $\Delta(1)^2=\eps(e)e_{\it top}\otimes e_{\it top}=(1\otimes e) \Delta(1)=\Delta_A(e)$ and TFAE (i) $\Delta(1)^2=0$, (ii)
 $\eps(e)=0$ and (iii) $e=0$.
\end{lem}
\begin{proof} If $A$ has socle in degree $d$, the total degree of $\Delta(1)^2$ is in degree $2d$ and this space is spanned by $e_{\it top}\ot e_{\it top}$.
It suffices to compute $(\eps\ot \eps)(\Delta_A(1)^2)=\la \Delta(1)^2,1\ot 1\ra=
\la \Delta_A(1), \Delta_A(1)\ra=\la \mu(\Delta(1)),1\ra=\eps(e)$. The other equation follow in similar fashion, using \eqref{deltaeq}. Since $e=\eps(e)e_{\it top}$, the equivalences follow.
\end{proof}

\begin{prop}
\label{factorprop}
If  $A$ graded Gorenstein, the action   corresponding to  the boundary components, which is  the boundary $\Delta_{CH}$, factors through maps to the degree $0$  part $A_0$ of $A$:
that is $CH^*(A,\bar A)\subset ker(\del_{0/1}(\D_{CH}))$ and
 \begin{equation}
 \label{imeq}
 \begin{aligned}
&Im(\del_0\Delta_{CH})\subset  CH^n(A,A_0)\otimes CH^0(A,A_0)\subset CH^n(A,A)\ot CH^0(A,A)\\
&Im(\del_1\Delta_{CH})\subset CH^0(A,A_0)\otimes CH^n(A,A_0)\subset CH^0(A,A)\otimes CH^n(A,A)
\end{aligned}
\end{equation}
\end{prop}

\begin{proof}
This follows from Corollary \ref{reducedcor}.
\end{proof}

Summing up:
\begin{thm}
\label{bdactionthm} If $A$ is graded Gorenstein,
the coproduct induces an operation on cohomology relative to or modulo the constants $CH^0(A,A_0)\simeq Hom(k,k)$ and thus is well defined on the reduced complex $\widetilde{CH}^*(A)$.
In particular, this is the case
for $A=H^*(X)$ for a connected Poincar\'e duality space $X$.

Furthermore, if  the Euler characteristic $\eps(e)=0$ vanishes,  the coproduct is  a cohomology operation on $HH^*(A,A)$ directly.

\qed
\end{thm}
\begin{rmk}
If $A$ is graded Gorenstein and $A=A_0 \oplus \bar A$. Since $\bar A=\bigoplus_{k>0} A_k$ is an $A$-$A$ bimodule, the constant maps, that is maps to $A_0=k$ can be identified with
$CH^*(A,A)/CH^*(A,\bar A)$.
\end{rmk}

\begin{rmk}
There is another way in which the constants in $CH^0$ appear in quotients.
If $A=1\oplus \bar A$ is augmented, then $CH_*(\bar A)$ which is linearly given by $\bar T\bar A$
computes the na\"ive Hochschild (co)homology of $\bar A$ \cite[\S1.4.3]{Loday}, and in the Gorenstein case the coproduct is well defined on this complex  and consequentially on its dual
as well.
\end{rmk}

\subsubsection{The coproduct as a homotopy}

The two boundary terms are  homotopic and  so are the operations.
In fact, the co-product {\em is} the homotopy between the left and right multiplication by elements of $CH^0$.
The boundary terms are also homotopic to the algebraic version of the pointwise coproduct of \cite{Ssigma}, see \S\ref{naivedualpar}.

Similar to the brace operation, we can regard the symmetrized coproduct $\D_{sym}:=\Delta_{CH}+\Delta_{CH}^{op}$.
Note that if one adopts signs as for the usual bracket, that is shifted degrees with the operation in the middle, see \cite[\S4.4]{del},
 the operation this is actually the anti--sym\-metrization of $\D$.

\begin{prop}
$\D_{sym}$  is also a well defined  homology operation.
It is null--homotopic modulo  $CH^0(A)$ or the constants in $CH^0(A)$
 in the case of $A$ being graded Gorenstein. Thus the coproduct is cocommutative modulo $CH^0(A)$ or
 in $\widetilde{CH}^*(A)$ is $A$ is graded Gorenstein.
\end{prop}
\begin{proof} See Example \ref{dopex}.
\end{proof}
Note, one does not really need that the algebra is connected. It could be the direct sum of connected (graded Gorenstein) components.
\subsection{Correlation functions and operations on  Hochschild (co)chains}
\label{copar}

\subsubsection{$Hom$ spaces and correlation functions}
\label{hompar}
The power
$\eta^{\otimes n}\otimes \check\eta^{\otimes m } $ is a  perfect pairing for $A^{\otimes n}\otimes \check A^{\otimes m}$.
For simplicity, we denote all these  by $\la \,,\,\ra$. Which precise form is used is determined by the type of elements the form is applied to. For example $\la a\otimes b, c\otimes d\ra=\la a,c\ra\la b,d\ra$.

Using the various dualites:
\begin{equation}
\label{dualizeFAeq}
Hom(A^{\otimes n}, A^{\otimes m})\simeq  A^{\otimes m}\ot \check A^{\otimes n}=\check A^{\otimes n+m}\simeq Hom(A^{\otimes n+m},k)
\end{equation}
Maps $Y\in Hom(A^{\ot n},k)$ are called correlations functions. Explicitly,
a correlation function $Y:A^{\otimes n}\to k$ defines an  elements in $Hom(A^{\otimes p},A^{\otimes q})$ for any $(p,q)$--shuffle
$\sigma$ via:
\begin{equation}
\label{dualizeeq}
\sign_{Z/2\Z}(\sigma)\sum_{C_1,\dots,C_q}   Y(\sigma(a_1 \odo a_p \otimes C^{(1)}_1 \odo C^{(1)}_q)) C^{(2)}_1 \odo C^{(2)}_q
\end{equation}
Where the sum is the multiple Sweedler sum for $q$ copies of the Casimir element and $\sign_{Z/2\Z}(\sigma)$ is the  Koszul sign for the shuffle.
These dualities extend to the tensor algebra   $Hom(TA^{\ot n}, TA^{\ot m})\simeq Hom(TA^{\ot n+m},k)$.
\begin{rmk}
By \eqref{dualizeeq},   $Y$ gives rise to different mophisms $\hat Y_{p,q}\in Hom(A^{\ot p}, A^{\ot q})$ for each $p+q=n$, which will be called {\em forms of $Y$}.
If $A$ is Frobenius then all these are equivalent. If it is not, some of these forms might exist apart from the others, see \S\ref{calcpar} for explicit examples
and in particular \S\ref{explicitpar} for the calculations relevant for the coproduct.
\end{rmk}

\subsubsection{Dualization to functions}
\label{CHpar}
\label{CHnota}

An element in $CH^n(A,M)$ is a sum of expressions $m\otimes  \check a_n \odo \check a_1$ with $m\in M$ and the $\check a_i\in \check A$.
For a Frobenius algebra,
$Hom(A^{\otimes  n},A)\simeq A \otimes \check A^{\otimes n}\simeq A^{\otimes n+1}$.  A function $f=a_0\ot \check a_n\odo \check a_1$ with $\check{a}_i=\la a_i, .  \ra$
is isomorphic to $\hat f= a_0 \odo a_n\in A^{\otimes n+1}\in TA$.
The first tensor factor plays a special role and will be called the module variable.   Vice--versa, $\hat f$ we recover $f$ as $f(b_1,\dots, b_n)=a_0\prod_{i=1}^n \la a_i,b_i\ra$.
The particular ordering is chosen to avoid an extra Koszul sign, cf. e.g.\  \cite{del}.

\subsubsection{Operations on $CH^*(A,A)$}
\label{conversionpar}
The dg--PROP action of \cite{hoch2} on $CH$ has homogenous components defined via  correlation functions whose definition proceeds as follows: Via the procedure given in \S\ref{cellpar}
a cell $c$ defines  correlation functions \eqref{cwtcor}, which are morphisms:
 \begin{equation}
\label{standardeq}
Y(c)_{p_i,q_j}
\in
Hom(A^{\otimes p_1+1}\odo A^{\otimes p_n+1}\otimes A^{\otimes q_1+1} \odo A^{\otimes q_m+1},k)
\end{equation}
where the $n$ and $m$ are part of the given data of $c$.
Dualizing the $A^{\ot q_i+1}$ according to \eqref{dualizeeq} one obtains a PROP action on $\bar TA$:
\begin{equation}
\hat Y(c)_{p_i,q_j}\in Hom(A^{\otimes p_1+1}\odo A^{\otimes p_n+1}, A^{\otimes q_1+1} \odo A^{\otimes q_m+1})
\end{equation}
Finally identifying the $A^{\ot k+1}\simeq CH^{k}(A,A)$ as in \S\ref{CHpar}, one obtains a dg--PROP action:
\begin{equation}
\label{opeq}
op_{CH}(c)_{p_i,q_j}\in \Hom(\bigotimes_{i=1}^n CH^{p_i},\bigotimes_{j=1}^m CH^{q_j})
\end{equation}
\begin{rmk}
By \S\ref{hompar} there are additional possible dualizations for the {\em individual operations}, see \S\ref{naivedualpar}.
For composable (PROPic) versions one needs to dualize these operations using Hochschild homology, see \S\ref{KWangpar}.
\end{rmk}

\subsection{Correlation functions and action on $CH^*$ from $c_\D$}
\label{CHcorpar}

The PROP cell for the coproduct 1-dimensional cell $c_{\Delta}$ is parametrized by an interval. The cell and its boundary $0$--cells are given in Figure \ref{boundaryfig}.
Notice that $\del_1C=\tau_{12}\del_0C$ where $\tau_{12}$ switches the ``out'' labels $1$ and $2$. Switching these two labels produces the cell for $\Delta^{op}$.

\subsubsection{The coproduct correlation function}
\label{coprodcorpar}
Using the procedure reviewed in \S\ref{cellpar} one duplicates arcs, assigns a local correlation function for each complementary region, and then takes the product of the local correlation functions to obtain the correlation function of the cell.
For the cell $c_\D$  one obtains one summand for each pair $(k,n)$  where the
left arc is duplicated $n$  and the right arc is duplicated $k-n$ times. The complementary regions are a central octagon and $n+k$ quadrilaterals.
This homogenous component  corresponds to a map $CH^{\ot k}\to CH^{\ot  n}\ot CH^{\ot k-n-1}$. The $(8,4)$ term is depicted in Figure \ref{coprodfig}.

\begin{figure}
\includegraphics[width=.6\textwidth]{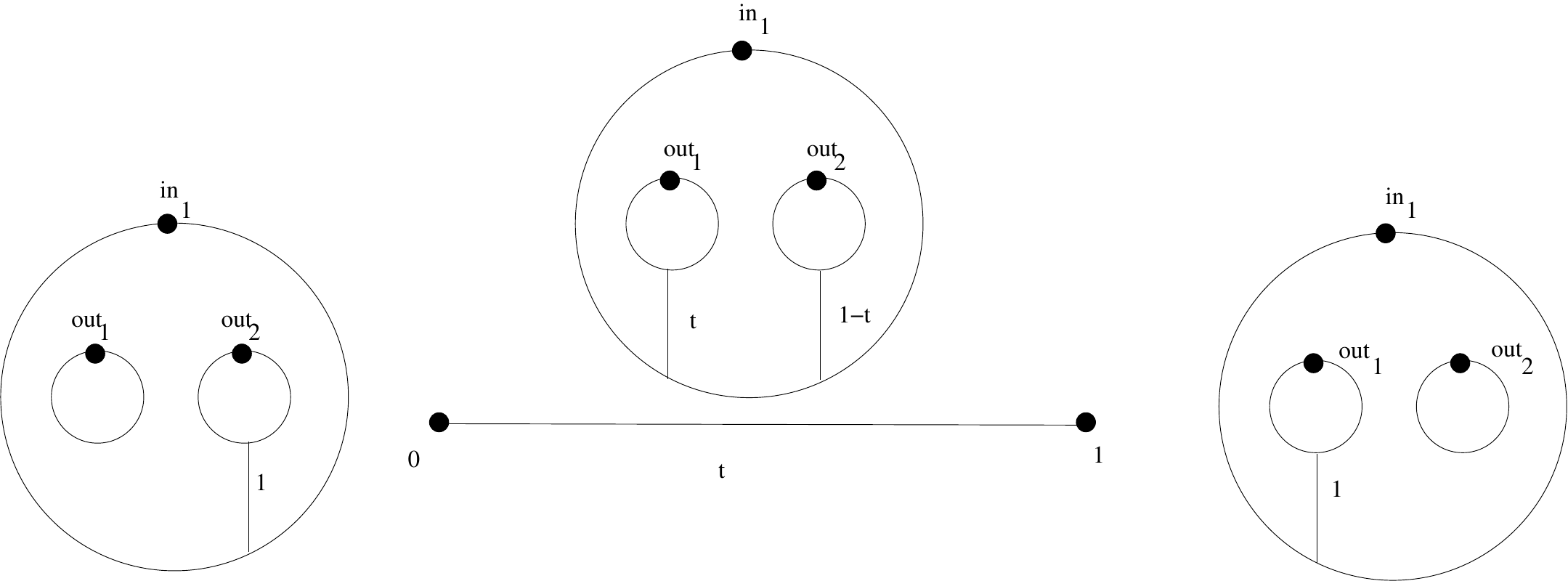}
\caption{\label{boundaryfig} The 1--dimensional cell and its two boundary points. }
\end{figure}
\begin{figure}
\includegraphics[width=.15\textwidth]{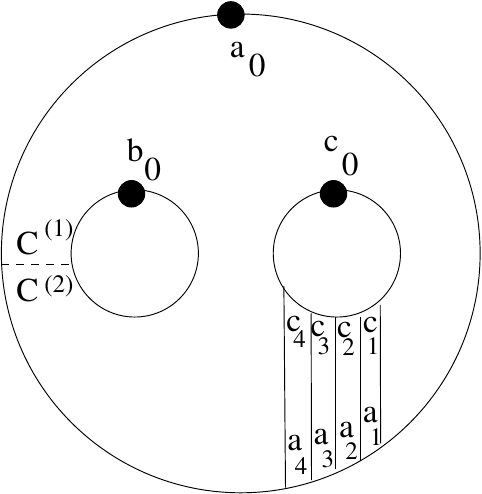}\hspace{1cm}
\includegraphics[width=.15\textwidth]{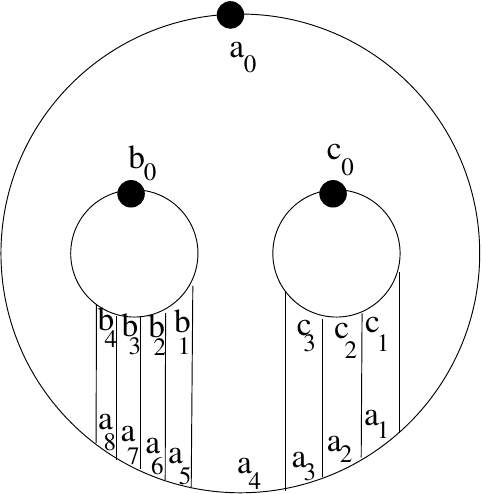}\hspace{1cm}
\includegraphics[width=.15\textwidth]{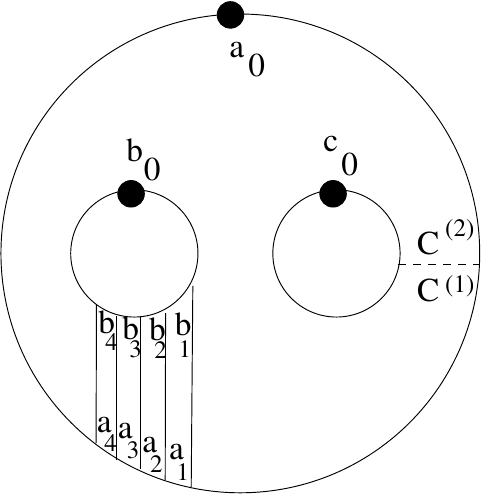}
\caption{\label{coprodfig}
\label{bdcoprodfig}
{Middle}: The $(8,3)$--summand of the component of the degree $1$ operation on $CH^*$ corresponding to $CH^8\to CH^3\otimes CH^4$. Cutting at the arcs yields one octagon $P_8$ and 7 quadrilaterals ($P_4$s). { Left:} the component  of the $\del_0$ boundary  operation $CH^4\to CH^4\ot CH^0$.
 {Right:}
The component  of the $\del_1$ boundary  operation $CH^4\to CH^0\otimes CH^4$. The extra cut for the annulus is the dotted line and decorated by
$C=\Delta(1)=C^{(1)}\ot C^{(2)}$. }
\end{figure}

\begin{prop}
\label{coprodcorprop}
The total correlation function for the cell $c_{\Delta}$ defining the degree $1$ coproduct $\Delta_{CH}$ is a product over the local correlation functions
$Y(c_{\Delta})=Y_A(P_8)\ot Y_{A} (P_4)\odo Y_{A}(P_4)\circ \sigma$, where the $Y(P_{2n})$ are given by \eqref{polygoncoreq} and $\sigma$ is a permutation.
The  formula of the operation on homogenous components  is given by
\begin{equation}
\label{Ycopeq}
\begin{aligned}
&Y(c_{\Delta})((a_0\odo a_{n}) \otimes(b_0\odo b_p) \otimes ( c_0\odo c_{n-p-1}))\\
&= \pm
\la a_0b_0a_{p+1}c_0 \ra \prod_{i=1}^p \la a_ic_i\ra \cdot \prod_{j=1}^{q}
\la a_{p+1+j}b_j  \ra
\end{aligned}
\end{equation}
\end{prop}
\begin{proof}
Decorating according  to \S\ref{standardpar},  the input pieces  of the boundary are decorated by $a_0\kdk a_n$ starting at the marked point going clockwise, that is in the opposite orientation of the boundary as it is an input. The two outputs are decorated by $b_0\kdk b_p$ and $c_0\kdk c_q$ respectively, also going  clockwise, which is  the induced orientation, used for outputs.
Cutting on the  arcs, one sees a central octagon whose sides are decorated by $(a_0,b_0,a_{p+1},c_0)$ in this cyclic order. The alternating sides of the quadrilaterals $P_4$
are decorated by $a_i,b_i$ on the left and by $a_{p+1+j},c_j$ on the right. The sign comes from the shuffle, shuffling the tensors into the given place according to \S\ref{cellorderpar}.
\end{proof}

\subsubsection{The  boundary correlation functions}
For the boundary of $c_\D$ the correlation function is more complicated as the complementary regions  are not simply polygons. The action according to \cite{hoch2}  is given by introducing in a system of extra cut--arcs decomposing each of the non--polygonal regions into polygons and decorating the two sides of the extra cut--arcs by Casimir elements. The procedure is reviewed in \S\ref{cellpar}, see Figure \ref{coprodfig} for the relevant example. In \S\ref{OTFTpar}, we prove different cut systems yield the same correlation function and show that the assumption of commutativity of $A$ made in \cite{hoch2} is unnecessary.
The  formula for the local correlation function is in \eqref{asscoreq}.
The appearance of the Casimir element is what makes the boundary factor through the constants.
After cutting these extra arcs, one is again left with a decorated polygon as above. In the case of the boundary of $c_\Delta$ one cut--arc suffices, see Figure \ref{bdcoprodfig}.

Decorating by elements of $A$, reading off the cyclic word, and integrating, one obtains the following operations:
\begin{prop}
 \label{boundarycorprop}
 Let $\del_0c_\Delta$ be the boundary at $t=0$ and $\del_1c_D$ the boundary at $t=1$,
then they define the following correlation functions:
\begin{equation}
\label{boundarycoreq}
\begin{split}
 Y(\del_0c_\D)(a_0\odo a_{n} \otimes b_0\ot  c_0\odo c_n )
 =\pm\sum \la a_0\Cp b_0\Cpp c_0\ra \prod_{i=1}^n\la a_ic_i\ra\\
 Y(\del_1c_\D)(a_0\odo a_{n} \otimes b_0\odo b_n \otimes c_0)
  = \pm\sum \la a_0b_0\Cp c_0\Cpp\ra  \prod_{i=1}^n\la a_ib_i \ra
\end{split}
\end{equation}
\end{prop}
\begin{proof}
Treating boundary at $t=1$,  there is only one arc to replicate.  The input is decorated by $a_0\kdk a_n$, the first output by $b_0\kdk b_m$, and the second output by $c_0$. After cutting, besides the $P_4$ quadrilaterals labelled by $a_i,b_i$, there is a central surface which is an annulus. One of the  two boundary components labelled by $a_0,b_0$ and the other by $c_0$. Inserting one cut arc and decorating it by the Casimir element yields the given correlation function according to \eqref{asscoreq}, see Figure \ref{bdcoprodfig}. The boundary at $t=0$ is analogous, albeit that the sole arc now runs to the other input.
\end{proof}
\subsubsection{Proof of Theorem \ref{coprodthm}}
\label{proof}
 Let $a_0\odo a_n$ be the input tensor and $b_0\odo b_p$ and $c_0\odo c_{n-p-1}$ be the two output tensors. Using the calculations for the dualization of $\int_2$ and $\int_4$ given in \S\ref{explicitpar}, \eqref{int2} and \eqref{int4}, and summing up the contributions:
\begin{equation}
\label{coprodhatYeq}
\begin{aligned}
&\hat Y(c_\D)(a_0\odo a_n)
=\sum_{p =0}^n \sum \pm (a_0^{(1)}\ot a_1\odo a_{p})\otimes (a_0^{(2)}a_{p+1}\ot a_{p+2} \odo a_{n})\\
&=\sum_{p =0}^n \sum \pm (\Cp a_0\ot a_1\odo a_{p})\otimes (\Cpp a_{p+1}\ot a_{p+2} \odo a_{n})
\end{aligned}
\end{equation}
where $\pm$ is the sign coming from shuffling in $a_0^{(2)}$.

Identifying the tensors with elements of $CH^*$, this translates $\D_{CH}:=op(c_\D)$ according to \S\ref{CHpar}.
For $f\in CH^n$:
$\Delta_{CH}(f)\in \bigoplus_{p+q=n-1} CH^p\otimes CH^q$ is given by \eqref{CHcoprodeq}
as $(\Cpp_1 \check a_p)(\Cp_2)=\la \Cpp_1  a_{p+1},\Cpp_1\ra=\la  a_{p+1},\Cp_2 \Cpp_1\ra=\check a_{p+1}(\Cp_2\Cpp_1)$.

For the boundary  the calculation for $\int_5$ in \S\ref{explicitpar} \eqref{int5} results in:
\begin{equation}
\label{bdeq}
\hat Y(\del_1c_\D)(a_0\odo a_n)= \sum (C^{(1)} a_0^{(1)}
\otimes a_1\odo a_n) \otimes C^{(2)} a_0^{(2)}
\end{equation}
where we used Sweedler's notation.
This in turn yields the operation
\begin{equation}
\label{bdopdfeq}
\begin{split}
\del_1\Delta_{CH}(f)((d_1\odo d_n)\otimes \lambda)
=(-1)^n \sum  \lambda [ (\Cp\otimes \Cpp) \Delta_A(f(d_1,\dots, d_n))]\\
= \lambda \Delta(1)\Delta_A(f(d_1,\dots,d_n))=\lambda \Delta(1)^2 ( f(d_1,\dots,d_n)\ot 1)
\end{split}
\end{equation}
and similarly for $\del_0\Delta_{CH}$. Where the last equality comes from \eqref{deltaeq}, viz.\
$
 \sum(\Cp\otimes \Cpp)\Delta_A(a)=\D_A(1)\D_A(a)=\Delta(1)^2
(a\otimes 1)$
\qed
\begin{cor}
\label{reducedcor}
If $A$ is graded Gorenstein, then the boundary correlation functions vanish  unless $a_0,b_0,c_0\in A_0$.
Dually,
$\del_{0/1}\D_{CH}(f)=0$ unless $f:A^{\ot n+1}\to A_0\simeq k$ is a constant map and the image of $\del_{0/1}\D_{CH}(f)$
has image as sepcified in \eqref{imeq}.

\end{cor}

\begin{proof}
Let $A$ have socle in dimension $d$, we see that each term $a_0\Cp b_0\Cpp c_0$ has degree at least $d$ and hence all the terms of the correlation functions are
$0$ unless $a_0, b_0$ and $c_0$ are of degree $0$ and hence all multiples of the unit $1\in A_0$.

In particular, the condition that
$a_0\in A_0$ implies that on $CH$
the operation is zero on any   map $f$ not having $A_0$ as image, and  $b_0,c_0\in A_0$ implies that the output functions are also maps to $A_0$.
\end{proof}

\begin{cor} If $A$ is graded Gorenstein,
the coproduct is a well defined cohomology operation,  in the complex $\overline{CH}^*(A,\bar A)$. \qed
\end{cor}

\subsection{Generalizing the actions}
\label{gendisccopar}
Using the point of view of \S\ref{viewpar}, the operations generalize from a Frobenius algebra in several ways.
 As the $\int_2$ terms represent identity morphisms, see \S\ref{explicitpar}\eqref{int2}, the term $\int_4$ is the only interesting one  in  $\hat Y(c_\D)$.
 In the form presented in \eqref{coprodhatYeq},  the module variable $a_0$ needs to be of the type $\CA_A$ with the rest of the
tensor variables lying in $A$, see \S\ref{explicitpar}\eqref{int4}. This means that the equation if well defined as a morphism $CH_*(A,\bisub{A}{\CA}{A})\to CH_*(A,\bisub{A}{\CA}{A})\ot CH_*(A,\bisub{A}{\CA}{A})$, for instance on $CH_*(C^*(M),C_*(M))$.

So, if $\check A$ is a coalgebra, for instance if $A$ is finite dimensional of of finite type,
then the operation exists as a morphism $CH_*(A,\check A)\to CH_*(A,\check A)$, where $A$ only needs to be associative.
 In case that one cannot identify $(A\ot A)^{\vee}$ with $\check A\ot \check A$, then specifying a special element $C$, see \S\ref{explicitpar}\eqref{int2}
allows one to use formalism of operadic correlation functions \cite[\S2]{hoch2}. Such an element is available if the coalgebra $\bisub{A}{\CA}{A}$ is pointed in Quillen's sense.

Using the alternative form, \eqref{homcoefeq}, one obtains a map $CH^*(A,A)\to CH^*(A,\bisub{A}{\CA}{A})\ot CH^*(A,\bisub{A}{\CA}{A}))$, thus in particular
$CH^*(C^*(X),C^*(X))\to CH^*(C^*(X),C_*(X))\ot CH^*(C^*(X),C_*(X))$. If $X=M$ is a manifold using the isomorphisms \eqref{cocoeffisoeq} and \eqref{hocoeffisoeq} one obtains a map
$H_{*+d}(LM)\to H_*(LM)\ot H_*(LM)$.

For the boundary operations, the discussion is analogous using \S\ref{explicitpar}\eqref{int5}, but in any formulation, due to the cut, there is the need for the special element $C$.
\section{Actions on (Co)chains of loop spaces and their geometric interpretation}
\label{looppar}
\subsection{Manifolds, Poincar\'e duality and intersection}
\label{intpar}
Let $M$ be a compact oriented connected manifold, then $A=H^*(M,k)$ is a Frobenius algebra over $K$ with $\mu_A=\cup$, $\eps=\int_M$ is the cap product with the fundamental class of $[M]$ followed by the augmentation map.
The duality between $A$ and $\check A=H_*(M,k)$ is known as Poincar\'e duality.

The integral $\int ab$ has the following dual geometric interpretation. Let $\check a$ and $\check b$ be Poincar\'e dual cycles intersecting transversally then
$\int ab$ is zero unless $\check a$ and $\check b$ have complementary dimensions and then $\int ab=\#$ of intersection points $a\pitchfork b$.

\subsection{Loop space models using Hochschild (co)chains}
\subsubsection{Geometric motivation}
If we regard a singular chain $c$ on the free loop space $LM=C(S^1,M)$,
we get a  chain $b_*(c)$ in $M$ by the push--forward with respect to the base--point map $b:LM\to M$ which sends $\phi$ to $\phi(0)$.
Evaluating at different points of $S^1$ gives similar maps.

The algebraic structure of Hochschild cochains
is given by sampling $S^1$ by  sequences of $n+1$ points that are cyclically ordered and coherent ---the first point always being $0$.
The $n+1$ points yield  $n+1$ singular chains. This gives a sequence  parameterized by $n$.
The $i$--th point may collide with the $i+1$st point lowering the point count in which case one should obtain the family with less points.
This is the coherence. Both points $1$ and $n$ can collide with $0$ which gives the extra degeneracy.

In terms of elements of $A^{\otimes n+1}$ the element $a_0\odo a_n$ represents the dual homology classes swept out by the $n+1$ points, that is $a_i$ is dual to the
homology cycle swept out by the $i$-th point and $a_0$ is dual to the base points of the loop.

\subsubsection{Cosimplicial viewpoint according Jones/Cohen-Jones \cite{jones,CJ}}

The sampling is formalized as follows:
 using a simplicial structure $S^1_\bullet$ on $S^1$ one obtains a cosimplicial structure on $Hom(S^1_\bullet,X)$ whose totalization gives
back the loop space. In fact, $Hom(S^1_\bullet, X)$  is cocyclic since $S^1_\bullet$ is cyclic. This cyclic structure is the reason for the existence of the BV operator.
More precisely, one has maps
\begin{eqnarray}
f_k:\Delta^k\times LX&\to& X^{k+1}\nn\\
(0\leq t_1 \dots \leq t_k\leq 1)&\mapsto& (\gamma(0),\gamma(t_1),\dots,\gamma(t_k))
\end{eqnarray}
which one can think of discretizing the loop.
These maps dualize to
\begin{eqnarray}
\bar f_k:LX&\to&Hom(\Delta^k,X^{k+1}) \nn\\
\gamma&\mapsto&(t_0\dots, t_k)\mapsto (\gamma(t_0),\dots,\gamma(t_k))
\end{eqnarray}
which are compatible with coface and codegeneracy maps; see \S\ref{simppar} for details.

\begin{thm}\cite{CJ,jones}
Let $X$ be a space and $f: LX\to \prod_{k\geq 0}Map(\Delta^k,X^k)$ be the product of the maps $\bar f_k$ then $f$ is a homeomorphism onto its image. The image is the subspace
$Tot(Map(S^1_\bullet,X))$, whose elements are those sequences that commute with the coface and codegeneracy maps.
\end{thm}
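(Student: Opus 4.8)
The plan is to identify $Tot(Map(S^1_\bullet,X))$ with the mapping space $Map(|S^1_\bullet|,X)=Map(S^1,X)=LX$ via the standard adjunction between totalization of a cosimplicial space and geometric realization, and then to check that under this identification the comparison map $f=\prod_{k\geq 0}\bar f_k$ is exactly the adjunction homeomorphism. Throughout one works in the category of compactly generated spaces, so that the exponential law holds as a homeomorphism.

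First I would write the geometric realization as a coend,
\[
|S^1_\bullet|\;=\;\int^{[k]\in\Delta} S^1_k\times\Delta^k ,
\]
recalling that for the minimal simplicial model of the circle each $S^1_k$ is a finite set (of cardinality $k+1$) and $|S^1_\bullet|\cong S^1$, the first vertex playing the role of the basepoint $t_0=0$. Applying $Map(-,X)$ turns the coend into an end and carries the copower $S^1_k\times\Delta^k$ to a power, giving natural homeomorphisms
\[
Map(S^1,X)\;\cong\;\int_{[k]\in\Delta}Map\bigl(S^1_k\times\Delta^k,X\bigr)\;\cong\;\int_{[k]\in\Delta}Map\bigl(\Delta^k,X^{S^1_k}\bigr)\;=\;Tot\bigl(Map(S^1_\bullet,X)\bigr).
\]
Each instance of $Map(A\times B,X)\cong Map(A,Map(B,X))$ is a homeomorphism, and the end on the right is computed as the equalizer of the two evident maps $\prod_k Map(\Delta^k,X^{S^1_k})\rightrightarrows\prod_{[j]\to[k]}Map(\Delta^j,X^{S^1_k})$; this equalizer is precisely the subspace of sequences commuting with all coface and codegeneracy maps, which is the asserted concrete description of $Tot$.

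Next I would unwind the components of this isomorphism: evaluating the composite homeomorphism on $\gamma\in Map(S^1,X)$ and restricting to the $k$-th factor yields the map $\Delta^k\to X^{S^1_k}$ sending $(t_1,\dots,t_k)$, with $0=t_0\leq t_1\leq\cdots\leq t_k\leq 1$, to $(\gamma(t_0),\dots,\gamma(t_k))$, which is exactly $\bar f_k$. The cofaces of $\Delta^\bullet$ then correspond to the collisions $t_i\rightsquigarrow t_{i+1}$ (and of $t_1,t_k$ with $0$), the codegeneracies to inserting a repeated sample point, and the cyclic rotation comes from the cocyclic structure on $S^1_\bullet$. Hence $f$ coincides with the adjunction homeomorphism onto $Tot(Map(S^1_\bullet,X))$. (Injectivity is in any case immediate from $\bar f_1$ alone, since $(t_0,t_1)\mapsto(\gamma(t_0),\gamma(t_1))$ already determines $\gamma$.)

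The main obstacle is bookkeeping rather than conceptual. One must pin down the simplicial model $S^1_\bullet$ so that $|S^1_\bullet|\cong S^1$ with basepoint identified with the coordinate $t_0=0$, and then match its face and degeneracy maps against the ``collide the $i$-th and $(i{+}1)$-st sample points / let $t_1$ or $t_k$ meet $0$'' description and the cyclic rotation, verifying compatibility with the cofaces $\delta^i$ and codegeneracies $\sigma^i$ of $\Delta^\bullet$. One must also stay inside compactly generated spaces for the exponential laws above to be genuine homeomorphisms, so that $f^{-1}$ is continuous and not merely a set-theoretic inverse, and one should take care that the naive (strict) totalization is what appears here. This is precisely the content of \cite{jones,CJ}, to which we refer for the detailed verifications.
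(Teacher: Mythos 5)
The paper does not prove this theorem; it is quoted from \cite{jones,CJ} without argument, so there is no in-paper proof to compare against. Your coend/end adjunction argument is the standard proof of the quoted result and is sound: in compactly generated spaces $Map(-,X)$ carries the coend $|S^1_\bullet|=\int^{[k]}S^1_k\times\Delta^k$ to the end $\int_{[k]}Map(\Delta^k,X^{S^1_k})$, the end is exactly the equalizer subspace of sequences compatible with cofaces and codegeneracies, and unwinding the adjunction on a loop $\gamma$ recovers $\bar f_k$. Note that you correctly work with $X^{S^1_k}=X^{k+1}$ (consistent with the paper's definition of $\bar f_k$), whereas the displayed target $\prod_k Map(\Delta^k,X^k)$ in the statement is a typo. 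The bookkeeping you defer --- fixing $S^1_\bullet=\Delta^1/\partial\Delta^1$, matching its (co)face and (co)degeneracy maps to the ``collision of sample points'' description, and checking the cocyclic structure --- is genuinely the remaining content and is precisely what \cite{jones,CJ} carry out, so deferring it with a citation is appropriate here.
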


A singular $l$--chain $c_l:\Delta^l\to LX$ can be regarded as a family of
loops  $\gamma_\bt$ depending on ${\bf t}\in \Delta^l$. Its discretization gives a family of
maps $\Delta^l \times \Delta^k\to X^{k+1}$ which is an $l+k$ chain on
 $X^{k+1}$. The chain is given by the usual shuffle product formula which expresses the bi--simplicial $\Delta^{l}\times \Delta^k$ as a union of simplices.

Thus pulling back along the $f_k$ and using the Alexander Whitney map $AW:C_*(X^{k+1})\to C_*(X)^{\otimes k+1}$, one obtains maps
\begin{equation}
f_k^*:C^*(X)^{\otimes k+1}\to C^{*-k}(LX)
\end{equation}
\begin{thm}\cite{jones,CJ}
\label{CJthm}
The homomorphisms $f_k^*$ define a chain map
\begin{equation}
f^*:CH_*(C^*(X))\to C^*(LX)
\end{equation}
which is a chain homotopy equivalence when $X$ is simply connected. Hence it induces an isomorphism
\begin{equation}
f^*:HH_*(C^*(X))\stackrel{\simeq}{\to} H^*(LX)
\end{equation}
dualizing these maps  and using that $HH_*(C^*(X))=HH_*(C^*(X);C^*(X))$ yields
\begin{equation}
f_*:C_*(LX)\to CH^*(C^*(X);C_*(X))
\end{equation}
which is a chain homotopy equivalence when $X$ is simply connected. Hence it induces an isomorphism
\begin{equation}
\label{hocoeffisoeq}
f_*:H_*(LX)\stackrel{\simeq}{\to}HH^*(C^*(X),C_*(X))
\end{equation}
\end{thm}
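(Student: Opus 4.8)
The plan is to recognize $f^*$ as a comparison of two models for the cochains of the totalization of a cosimplicial space, and then to invoke convergence of the associated spectral sequence; this last point is where simple connectivity enters and is the only genuine obstacle. First I would record the cosimplicial picture. Writing $S^1_\bullet$ for the standard simplicial model of the circle, $\mathrm{Map}(S^1_\bullet,X)$ is a cosimplicial space with $\mathrm{Map}(S^1_k,X)=X^{k+1}$, its cofaces inserting a diagonal and its codegeneracies forgetting a coordinate; by the first theorem quoted above its totalization is $LX$. Equivalently, since $S^1$ is the homotopy pushout of $*\leftarrow *\sqcup *\to *$, one has $LX\simeq X\times_{X\times X}X$ along the two diagonals. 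Applying singular cochains to this cosimplicial space yields (in the $\mathrm{Tot}$-model) a double complex whose $k$-th column is $C^*(X^{k+1})$ and whose horizontal differential, under the K\"unneth quasi-isomorphism $C^*(X)^{\otimes k+1}\xrightarrow{\simeq}C^*(X^{k+1})$, is precisely the cyclic bar differential of the dg algebra $C^*(X)$; its total complex is thus a model for $CH_*(C^*(X))$. The map $f^*$ is then assembled from the $f_k$ as the composite $C^*(X)^{\otimes k+1}\to C^*(X^{k+1})\xrightarrow{f_k^*}C^*(\Delta^k\times LX)\to C^{*-k}(LX)$, the last arrow being the slant product with the fundamental chain of $\Delta^k$; the cosimplicial identities for $f_\bullet$ recorded in \S\ref{simppar} produce exactly the signs making $f^*$ a chain map.

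Next I would show $f^*$ is a chain homotopy equivalence when $X$ is simply connected. Both sides carry compatible filtrations — the skeletal ($\mathrm{Tot}$) filtration on the cochains of $\mathrm{Map}(S^1_\bullet,X)$ and the bar-length filtration on $CH_*(C^*(X))$ — and $f^*$ preserves them. On associated graded, $f^*$ is in each bar degree the K\"unneth/Alexander--Whitney equivalence $C^*(X)^{\otimes k+1}\simeq C^*(X^{k+1})$, hence an isomorphism on $E_1$. The Hochschild-side spectral sequence converges for connectivity reasons (bounded below in the total grading once $X$ is simply connected); the $LX$-side one is the Eilenberg--Moore / Bousfield--Kan spectral sequence of the homotopy pullback $X\times_{X\times X}X$, which converges strongly to $H^*(LX)$ precisely when $X$ is simply connected, so that $\pi_1$ acts trivially on the fibres. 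Comparing the two, $f^*$ is a quasi-isomorphism; since the complexes involved are bounded below and degreewise free over $k$ (and in any event we are free to work over a field), it is then a chain homotopy equivalence, and on cohomology this gives $f^*\colon HH_*(C^*(X))\xrightarrow{\simeq}H^*(LX)$.

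Finally, the homological statement follows by dualizing. Degreewise $k$-linear duality turns $f^*$ into a chain homotopy equivalence $C^*(LX)^\vee\to CH_*(C^*(X))^\vee=CH^*(C^*(X);C^*(X)^\vee)$, using the tautological identity $\Hom(A\otimes A^{\otimes n},k)=\Hom(A^{\otimes n},A^\vee)$ for $A=C^*(X)$. Rewriting the coefficients along the evaluation map $C_*(X)\to C^*(X)^\vee$ and the source along $C_*(LX)\to C^*(LX)^\vee$ — both equivalences once $X$, hence $LX$, has finite-type homology, as in \S\ref{CHpar} — identifies the outcome with a map $f_*\colon C_*(LX)\to CH^*(C^*(X);C_*(X))$ which, unwound, is built directly from the $f_k$ using singular chains, Eilenberg--Zilber subdivision of $\Delta^k\times\Delta^l$, and Alexander--Whitney $C_*(X^{k+1})\to C_*(X)^{\otimes k+1}$. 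The same filtration argument in its homology form, with the Eilenberg--Moore spectral sequence again the crucial input, shows $f_*$ is a chain homotopy equivalence for $X$ simply connected, giving $f_*\colon H_*(LX)\xrightarrow{\simeq}HH^*(C^*(X),C_*(X))$ on homology. The hard part throughout is exactly the convergence of the Eilenberg--Moore spectral sequence: this is the single place where the simple connectivity hypothesis cannot be removed, and it is what obstructs the statement for general $X$.
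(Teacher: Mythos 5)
The paper itself offers no proof of this statement: it is recalled with citation from Jones and Cohen--Jones \cite{jones,CJ}, so the only meaningful comparison is with those sources, and your sketch does reproduce their argument --- the cosimplicial model $\Hom(S^1_\bullet,X)$ with totalization $LX$, the map assembled from the $f_k$ by slant product with the fundamental chain of $\Delta^k$, the comparison of the bar-length filtration with the skeletal ($\mathrm{Tot}$) filtration giving an isomorphism on $E_1$ via Alexander--Whitney/K\"unneth, and strong convergence of the Bousfield--Kan/Eilenberg--Moore spectral sequence as the single place where simple connectivity enters. The one genuine deviation is in the homological half: you produce $f_*$ by dualizing $f^*$ and then rewriting coefficients along the double-dual/evaluation map, which obliges you to assume finite-type homology --- a hypothesis not present in the statement. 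The paper (following \cite{CJ}) instead describes $f_*$ directly, sending an $l$-dimensional family of loops $\gamma_{\bt}$ to the evaluation maps on $AW(\gamma_{\bt}(0)\odo\gamma_{\bt}(t_k))$, and it notes explicitly that naive dualization lands in $\Hom(CC_*(C^*(X)),k)\simeq CH^*(C^*(X),C^*(X)^{\vee})$, the passage to $C_*(X)$-coefficients going through the evaluation morphism of \S\ref{CHpar}; the direct construction needs no finiteness to be defined, and the filtration/convergence comparison is then run on it in its homological form. Your route is harmless in the paper's intended setting (a compact simply connected manifold, where finite type is automatic), but as written it proves slightly less than the quoted statement, so you should either flag the hypothesis or switch to the direct construction of $f_*$. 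Otherwise the sketch is sound, including the observation that a reduced/normalized model with $\overline{C}^*(X)$ concentrated in degrees $\geq 2$ is what makes the bar filtration finite in each total degree on the Hochschild side.
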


\begin{rmk}
The direct dualization yields the dual of the complex $Hom(CH_*(C^*(X)),k)$.
 If $A$ is finite dimensional or of finite type, then as remarked previously, up to signs $CH^*(A)=Hom(CH_*(A),k)\simeq CH^*(A,\check{A})$\cite[1.1.5]{Loday},
with the isomorphism given by $F\leftrightarrow f$ as defined by
\begin{equation}
F(a_0,\dots, a_n)=f(a_1,\dots,a_n)(a_0)
\end{equation}
Taking this as a definition always gives a map $ CH^*(A,\check{A})\to CH^*(A)$.
 In total, the map $f_*$ can be seen as a the map that takes
an $l$ dimensional family of loops $\gamma_\bt$ to the evaluation maps
\begin{equation}
F_k=ev_{AW(\gamma_\bt(0),\dots, \gamma_\bt(t_k))}
\in Hom(CH^*(C_*(X)),k)
\end{equation}
where on the right hand side the  degree is $k+1$ and the total degree  is $k+l$.
This gives the explicit description with homological coefficients.
\end{rmk}

Using the same kind of rationale Cohen-Jones also prove  a second description with cohomological coefficients.
\begin{thm}[Corollary 11, Theorem 1 \cite{CJ}]
For any closed (simply connected) $d$ dimensional manifold $M$:
$H_{*+d}(LM)\simeq H_*(LM^{-TM})$.
And, there are naturally defined chain maps $f_{k,*}$ which fit together to
define a chain homotopy equivalence
\begin{equation}
f_*: C_*(LM^{-TM})\to CH^*(C^*(M),C^*(M))
\end{equation}
inducing an isomorphism
\begin{equation}
\label{cocoeffisoeq}
f_*: H_*(LM^{-TM})\simeq HH^*(C^*(M),C^*(M)) \simeq H_{*+d}(LM)
\end{equation}
\end{thm}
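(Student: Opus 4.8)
\medskip
\noindent\textbf{Proof proposal.} The plan is to twist the Cohen--Jones maps $\bar f_k$ of the preceding theorem by a fibrewise Pontryagin--Thom construction and Atiyah duality, and then to read off the equivalence from the untwisted statement already recorded above.

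First I would set up two auxiliary equivalences. Since $M$ is simply connected it is orientable, so the pullback $ev^*(-TM)$ of the virtual bundle $-TM$ along the evaluation $ev\colon LM\to M$, $\gamma\mapsto\gamma(0)$, is an oriented virtual bundle of rank $-d$ over $LM$; capping with its Thom class gives a quasi-isomorphism $\cap\,u\colon C_*(LM^{-TM})\xrightarrow{\ \simeq\ }C_{*+d}(LM)$ realizing the Thom isomorphism $H_*(LM^{-TM})\simeq H_{*+d}(LM)$. Over a point this is Atiyah duality: $M^{-TM}$ is the Spanier--Whitehead dual $\mathbb D(M_+)$, so $C_*(M^{-TM})$ is chain homotopy equivalent to $C^*(M)$ up to the degree $d$ shift (equivalently $H_n(M^{-TM})\cong H^{-n}(M)$), and under the Thom isomorphism this equivalence is identified with Poincar\'e duality.

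Next I would construct the maps $f_{k,*}$. The $0$th component of $f_k\colon\Delta^k\times LM\to M^{k+1}$ is, by its definition, the projection to $LM$ followed by $ev$, hence does not depend on $\bt\in\Delta^k$; therefore the pullback under $f_k$ of the bundle $p_0^*(-TM)$ on $M^{k+1}$ is $\mathrm{pr}_{LM}^*\,ev^*(-TM)$ on $\Delta^k\times LM$. Applying the Pontryagin--Thom collapse fibrewise over $\Delta^k$ then produces maps of Thom spectra
\begin{equation}
\Delta^k_+\wedge(LM)^{-TM}\longrightarrow (M^{k+1})^{p_0^*(-TM)}=M^{-TM}\wedge(M^k)_+ .
\end{equation}
Composing with Atiyah duality $M^{-TM}\simeq\mathbb D(M_+)$, taking singular chains, applying the Alexander--Whitney map on the $(M^k)_+$ factor, and using $C_*(\mathbb D(M_+))\simeq C^*(M)$ together with $C^*(M)^{\vee}\simeq C_*(M)$, one obtains natural chain maps
\begin{equation}
f_{k,*}\colon C_*(LM^{-TM})\longrightarrow C^*(M)\otimes C_*(M)^{\otimes k}=CH^{*}(C^*(M),C^*(M))\ \text{ in cyclic degree }k+1 ,
\end{equation}
with the same internal $-k$ degree shift as for the $\bar f_k^*$. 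Compatibility with the co--face and co--degeneracy maps is inherited from that of the $\bar f_k$ together with the functoriality of the Pontryagin--Thom construction and of Atiyah duality, so the $f_{k,*}$ assemble into a chain map $f_*\colon C_*(LM^{-TM})\to CH^*(C^*(M),C^*(M))$.

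Finally I would deduce that $f_*$ is a chain homotopy equivalence. The construction is arranged so that the Thom quasi-isomorphism $\cap\,u$ on the source and the Poincar\'e duality quasi-isomorphism of $C^*(M)$--bimodules relating $C^*(M)$ and $C_*(M)$ (up to the degree $d$ shift) on the target fit into a square with this $f_*$ and the untwisted $f_*\colon C_*(LM)\to CH^*(C^*(M),C_*(M))$, commuting up to coherent homotopy; the essential point is that twisting by $ev^*(-TM)$ refines the point--level equivalence $C_*(M^{-TM})\simeq C^*(M)$ and is therefore compatible with Poincar\'e duality. Since the two vertical maps are quasi-isomorphisms and the untwisted $f_*$ is a chain homotopy equivalence for simply connected $M$ by the preceding theorem, so is the twisted $f_*$; passing to homology yields $H_*(LM^{-TM})\simeq HH^*(C^*(M),C^*(M))$, and combining with the Thom isomorphism gives $\simeq H_{*+d}(LM)$. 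I expect the hard part to be the middle step: making the fibrewise Pontryagin--Thom/Atiyah--duality twist precise at the point--set chain level and verifying that the resulting $f_{k,*}$ are strictly compatible with the cosimplicial structure maps, so that they genuinely assemble into a chain map. Once this naturality is in hand the homotopy--equivalence conclusion is formal given the untwisted case.
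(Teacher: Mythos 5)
The paper gives no proof of this statement—it is quoted verbatim from Cohen--Jones (Theorem 1 and Corollary 11 of \cite{CJ}), whose argument proceeds exactly as you outline: a cosimplicial spectrum with $k$-th level $M^{-TM}\wedge (M^k)_+$ obtained by twisting the evaluation coordinate of the cosimplicial model of $LM$ by $-TM$, fibrewise Pontryagin--Thom collapses, Atiyah duality $M^{-TM}\simeq \mathbb{D}(M_+)$, and comparison with the untwisted equivalence, with simple connectivity ensuring convergence. So your proposal is essentially the same approach as the cited source, and the point you flag as the hard part (making the twisted maps strictly compatible with the co--face and co--degeneracy maps and identifying the chain-level Poincar\'e duality map $CH^*(C^*(M),C_*(M))\simeq CH^*(C^*(M),C^*(M))$ of bimodule coefficients) is precisely where the work is done in that reference.
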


\subsubsection{Discretizing and Dualizing}
\label{simppar}
We give the explicit (co)-face and (co)-degeneracy maps of the simplicial/cosimplicial structures at the various level. This allows us to identify the constant loops in the Hochschild cochain complex.
They may also be used to find the Hochschild cochain representations of families of loops used in the arguments of string topology \cite{CS,GH} using the totalization.

For a discretized loop $\gamma$  these are:\\
\begin{tabular}{l|l}
$\delta_i:\Delta^k\to \Delta^{k+1}$&
$Hom(\Delta^k,X^{k+1})\to Hom(\Delta^{k+1},X^{k+2})$\\
$(\dots,t_i,\dots)\mapsto (\dots, t_i,t_i,\dots)$&$\gamma(\dots,t_i,\dots)\mapsto \gamma(\dots,t_i,t_i,\dots)$\\
$\sigma_i:\Delta^{k+1}\to \Delta^k$&$Hom(\Delta^{k+1},X^{k+2})\to Hom(\Delta^{k},X^{k+1})$\\
$(\dots,t_i,\dots)\mapsto (\dots, \hat t_i,\dots)$&$\gamma(\dots,t_i,\dots)\mapsto \gamma(\dots,\hat t_i,\dots)$\\
\end{tabular}\\
Thus, the map $\delta_i$ induces the map $\Delta_{i,*}$ which after applying the AW map $C_*(X^k)\to C_*(X^{k+1})$ is just the coproduct.

For families/homology classes using the diagonal maps $\D_i$ which repeat the $i$--the entry and projection $\pi_i$ which omit the $i$--entry:\\
\begin{tabular}{l|l}
$\Delta_i:X^k\to X^{k+1}$&$\Delta_{i,*}:
C_*(X^k)\to C_*(X^{k+1})$\\
$(\dots,\gamma_i,\dots)\mapsto (\dots, \gamma(t_i),\gamma(t_i),\dots)$&$\gamma_{\bt}(\dots,t_i,\dots)\mapsto \gamma_\bt(\dots,t_i,t_i,\dots)$\\
\multicolumn{2}{r}{$\D_i:=id\odo id\otimes \Delta\otimes id \odo id:C_*(X)^{\otimes k}\to  C_*(X)^{\otimes k+1}$}\\
&$\gamma_0\odo\gamma_k\to\gamma_0\odo \gamma_i^{(1)}\otimes \gamma_i^{(2)}\odo \gamma_k$\\
$\pi_i:X^{k+1}\to X^k$&$C_*(X^{k+1})\to C_*(X^{k})$\\
$(\dots,\gamma(t_i),\dots)\mapsto (\dots, \hat \gamma(t_i),\dots)$&$\gamma_\bt(\dots,t_i,\dots)\mapsto \gamma_\bt(\dots,\hat t_i,\dots)$\\
\multicolumn{2}{r}{$\eps_i:=id\odo id \otimes \eps\otimes id\odo id:C_*(X)^{\otimes k+1}\to  C_*(X)^{\otimes k}$}\\
&$\gamma_0\odo\gamma_k\to \gamma_0\odo \eps(\gamma_i)\odo \gamma_k$\\
\end{tabular}

Finally dualizing, in the manifold setting, we see that these morphisms go to
\begin{eqnarray*}
\mu_i:=id\odo \otimes \mu \otimes id\odo id&:& C^*(M)^{\otimes k+1}\to C^*(M)^{\otimes k}\\
\eta_i:= id \odo id\otimes \eta\otimes id \odo id&:&C^*(M)^{\otimes k}\to C^*(M)^{\otimes k+1}
\end{eqnarray*}
where $\mu$ is the multiplication given by the $\cup$ product and
$\eta:\mathbb{Z}\to C^*(M)$ is the unit.

\subsubsection{Constant loops}
The discretized series for a constant loop $\gamma(t)\equiv x \in M$ is given using the maps $\delta_i$
$$(\gamma(0),\dots, \gamma(t_k))=\delta_{k-1}\cdots\delta_0\gamma(0)$$
Thus a constant family of loops has the series
$$
AW(\gamma_0\odo\gamma_k)=\Delta_{k-1}\circ\cdots \circ \Delta(\gamma_0)
$$
which can be reconstructed from
$$
\gamma_0=\eps_1 \circ \cdots \circ \eps_1 (\gamma_0\odo\gamma_k)
$$
Dually the cochain/cohomology sequence is given by
$$
(\check{\gamma}_0\odo \check{\gamma}_k)=AW^*(\eta_k \circ\cdots \circ\eta_1(\check\gamma_0))
$$
From these formulas one obtains that evaluation at a constant loop in degrees bigger than $0$ is in the degenerate subcomplex and these do not appear in the  normalized complex.

\begin{prop}
\label{constloopprop}
In higher degrees, the image of constant loops is  in the degenerate subcomplex. In the normalized complex their image is  $\overline{CH}^0(C^*(M),$ $C_*(M))=C_*(M)$.
Moreover, choosing a base point for $M$ defines a constant loop as a base point for $LM$ and the reduced homology of $\bar H_*(LM)$ is quasi isomorphic to the dual of the reduced
  chain complexes $\widetilde{HH}_*(C^*(M))$ and $\widetilde{HH}^*(C^*(M))$ computed by the reduced (co)chain complexes.
\end{prop}
\begin{proof}
The only thing left to prove is the surjectivity. For this one identifies a singular chain $f:\D^k\to M$ as a family of constant loops.
\end{proof}

\subsection{Geometric interpretation for loop spaces}
\label{interpretpar}

The preceding theorems and corollaries translate the algebraic results to a geometric  interpretation in terms of loops. By this we mean  that the given algebraic operations
reflect a geometric situation, in which usually transversality is assumed.  This is analogous to the discussion of transversal intersection in \S\ref{intpar} and  quantum cohomology \cite{maninbook}, where the true operations are the Gromov-Witten invariants and the geometry they reflect is the enumerative geometry, which is itself elusive.

For the loop space geometry this agreement with the geometry that  applying discretization given via the totalization to a
 geometric input family, e.g.\ constant loops or figure 8 loops, is commensurate with the algebraic operations.

\subsubsection{Figure 8 loops}
\label{fig8par}
We define the subspace of Figure 8 loops $F_8\subset Tot(Map(S^1_\bullet,X))$, those maps that factor through $Tot(Map(S^1_\bullet \vee S^1_\bullet,X))$ for a given simplicial model of the map $S^1_\bullet \to S^1_\bullet \vee S^1_\bullet$.
These can be represented by sequences $(\g(0),\g(t_1)\kdk \g (t_k))$ for which $\g(t_i)=\g(0)$ for some $i$. That is they are in the image of the small diagonal map $\D_{0,i}:X^{k+1}\to X^{k+2}$ duplicating the first and $i+1$st factors.
Decomposing $\D^k=\D^{k_1}*\D^{k-k_1-1}$ induces maps $Hom(\D^k, X^{k+1})\to Hom (\D^{k_1},X^{k_1+1})\times Hom(\D^{k-k_1-1},X^{k-k_1-1})$. When restricted to $F_8$ these maps yield coherent families and yield a map
$F_8\to Tot(S^1_\bullet, X)\times Tot(S^1_\bullet, X)$.  Let  $L_8M\subset LM$ be the space of these loops and $\Delta_8:L_8M\to LM\times_M  LM\subset LM\times LM$ the map constructed above via the totalization.
 That is we obtain models for the maps $LM\stackrel{i_8}{\leftarrow}F_8\stackrel{\Delta_8}{\to} LM\times_M LM \subset LM\times LM$,
 where  $i_8:F_8\to Tot(S^1_\bullet, X)\approx LM$.

 \subsubsection{Levels of action}

By Proposition \ref{constloopprop}, one can identify the
constant loops with $\overline{CH}^0$ in the normalized chain complex and hence Corollary \ref{constcor} tells us that the operations are  well defined modulo constant loops as in \cite{GH}.
We even have more, namely that the coproduct already descends to operations relative to a base point constant loop.
The three  levels of \S\ref{threepar} as they relate to loop spaces are:

(1)  Since $E^1(CH^*(C^*(M)),C_*(M))=CH^*(H^*(M),H_*(M))\simeq CH^*(H^*(M))$, an action on this page is exactly the case discussed above for the action on $CH^*(A,A)$ for the Frobenius algebra $A=H^*(M)$.

(2) Note that in the formulas $\eps=\int$ lifts to the chain level as it  can be replaced by capping with the fundamental class $\cap[M]$. The multiplication $\cup$--product also lifts the chain level. This means that the correlation functions all lift to $CH^*(C^*(M))=CH^*(C^*(M),C_*(M))$ $\simeq CH^*(C^*(M),C^*(M))$, which means they are well defined and induce the operations on the $E^1$ page.
To obtain  PROP action one has to ``dualize'' the outputs.
This can be done by {\em  choosing a cochain representative} of the  diagonal $C=\sum \Cp\otimes \Cpp$ and simply using \eqref{dualizeeq} as a definition.

The formalism of using a propagator $C$ to define actions  is discussed in detail is \cite[\S2]{hoch2} under the name of operadic correlation functions. The relevant result is  \cite[Theorem 4.15]{hoch2}.

(3) Lastly, if one does not look at the whole PROP of operations on one space, one can pick individual operations and see if picking cleverly from the  descriptions
$CH^*(C^*(M),C_*(M))$, $CH^*(C^*(M), C^*(M))$ or $CH_*(C^*(M))$ for $C_*(LM)$ yields a formula that does not utilize dualization.

The classical example is the  product. In this case,  one can take the coefficient module to be an algebra. This was the motivation for \cite{CJ}. If fact it is clear from our formulas, that the whole little discs suboperad will act when picking cohomological coefficients \cite{del}.
For the BV action, the natural space is $\overline{CH}^*(A)$, for a Frobenius algebra $A$  together with its cyclic structure \cite{cyclic}.
For the coproduct the natural morphism is    $CH_*(C^*(M),C_*(M))\to CH^*(CH^*(M),CH_*(M))$ as now the coefficients have a coproduct structure, see \S\ref{gendisccopar}.

\subsubsection{Coproduct on loop space}
\label{geocoprodpar}
We will now discuss the degree $1$ coproduct from all three different points of view.

(1) In terms of dual classes, we see that
 the first term says that $a_0,b_0,a_{p+1}$ and $c_0$
 ``coincide'' in the sense that if we
 use the interpretation of the $\cup$ product as intersection of the dual homology chains, see \S\ref{intpar}. The degree count says says that the loci need to intersect in points (counted with multiplicity).

  This means that all the base points and the $p+1$st point coincide, which is indeed the situation of \cite{GH,Sopenclosed,Ssigma,Sullivanoverview}. Summing over all $p$  re--parameterizes the loop.
 The map  sends the first loop which is a figure 8, to the two loops as in the Figure \ref{figure8}.

 \begin{figure}
 \includegraphics[width=.7\textwidth]{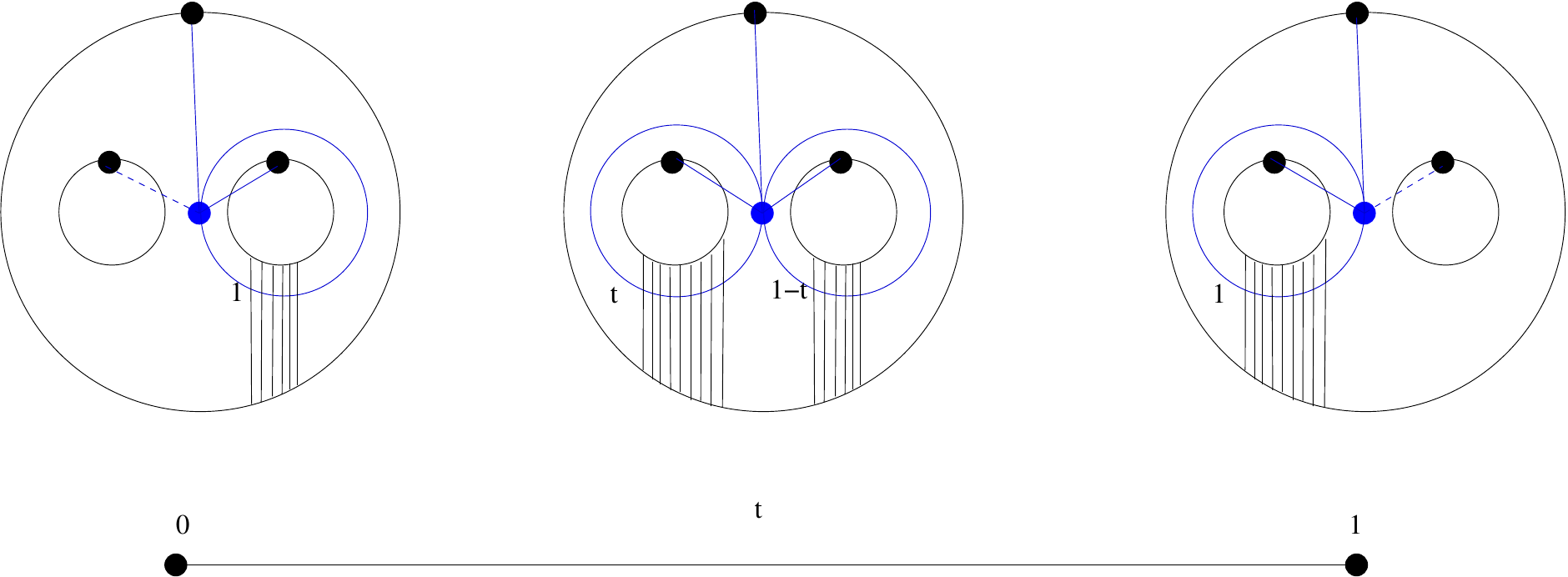}
 \caption{\label{figure8} The blue graph is the dual graph to the weighted arc system. The tails and dotted tail  keep track of the almost ribbon structure and the base points \cite[Appendix A1]{postnikov}.}
 \end{figure}

(2) Lifting to chains, we see from \eqref{coprodhatYeq},  that he coincidence conditions for spawning off of a the loop via the coproduct
are being forced by the intersection with the diagonal ---again forcing the situation of \cite{GH} that encodes \cite{Sopenclosed,Ssigma,Sullivanoverview}.

(3) As discussed in \S\ref{gendisccopar} the operations lift to  operations  $CH^*(C^*(M),C^*(M))\to  CH^*(C^*(M),C_*(M))\ot  CH^*(C^*(M),C_*(M))$ which also give a chain model for  the loop space homology.
Interpreting this as homology classes given by discretezations of loops, and uses $C\in C_*(M)\ot C_*(M)$
as
a chain representative of the diagonal the terms
 $\Cp a_0$ and  $\Cpp a_{p+1}$ becomes  the intersections $C \pitchfork (a_0\ot a_{p+1})$.
Restricting to the space where there is such  intersections is the starting point of \cite{GH}.

\subsubsection{Boundary operations}
\label{bdgeopar}
We again have the three points of view above:

(1) On the level of classes and dual intersections, we see that \eqref{bdeq} says that the loop itself is left alone and spawns off a second constant loop at its base point.
We furthermore see that due to degree reasons $a_0,b_0,c_0$ all must be of degree $0$. This is due to the fact that the coproduct and the intersection with the diagonal produce a term $\Delta(1)$ which is already in top degree. This means that dually $a_0$ and hence $b_0$ and $c_0$ which coincide up to scalars have to sweep out the entire $M$. The dual interpretation is consistent with the intersection interpretation. Indeed, just like the cup product with $1$ is trivial, so is the intersection with all of $M$.
The loop that spawned off is a constant loop.

(2) The lift to chains is possible along the same lines as in the coproduct case and the geometric statements are those made above.

(3)  The interpretation of $C$ as a chain representative of the diagonal  intersected with the relevant homology classes applies as in the coproduct case.

 \subsubsection{Identifying  coproducts}
An algebraic realization of the loop space is given by a Frobenius dgA model for $M$. This exists for instance if $M$ is formal. Such a model has also more generally been provided   in \cite{LambStan}.

A transversal realization of the string topology operations is a geometric constructions which on transversal families of loops induces the \cite{CS} type string topology operations.
The \cite{GH} coproduct is of this type.
Transversal realization is also  the input for Umkehr maps \cite{Umkehr} and guarantees a Cohen-Jones \cite{CJ}  type of setup as postulated in \cite[\S 4.6]{KLP}.
Umkehr on (co)homology uses Poincar\'e duality \cite{Umkehr} and as in \S\ref{intpar} turns intersection into cup products.
The  map should be
$\Delta^!_{*} i_{8!}:H_*(LM)\to H_*(LM)\ot H_*(LM)$where $i_{8}$ is a Umkehr map, that means a map going the ``wrong way''.  This and other geometric schemes
  this can be  traced through the discetization and starting at $E^1$ the transversal intersection of loops can hence  be characterized via the previous calculations.

\begin{cor}
\label{constloopcor}
 For an algebraic realization of the coproduct or a  transversal realization the coproduct descends to a cohomology operation on the reduced complex $\widetilde{HH}^*(C^*(M))$, inducing a coproduct on $\bar H_*(LM)$.
 Such a realization induces a morphism on the $E^1$ page of the spectral sequence which is given by the $\D_{CH}$.
\end{cor}
\begin{proof}
Following through the discretization as detailed above, we see that the formula for the coproduct is indeed the transversal intersection in the form of cup products.
\end{proof}

\section{Geometry \& actions of CW complexes and dualties}
\label{cellpar}

\subsection{Cells}
The correlation functions of \cite{hoch2} are given for cells in a CW complex $\A$ together with an interval marking via
 data indexing the cell.
The  complex $\A$ is a CW complex whose cells indexed by classes represented by an oriented surface $\Sigma$, with enumerated boundary components $\del\Sigma=\amalg_{i=1}^n S^1$,
 one marked point $p_i$ in each boundary component, and   an  arc system.  An arc system is a set  $\a$  of nonintersecting embedded curves, aka.\ arcs, that run from boundary to boundary not hitting the marked points which are not parallel and not parallel to the boundary. This configuration is considered up to isotopy and mapping class group action.  The classes $c=[(\Sigma, p_i, \a)]$ index the cells of a CW complex $\A$. The dimension of a cell is $|\a|-1$ and the interior of a cell naturally identified with the open simplex $\dot \Delta^{|a|-1}\subset \R^{|\a|}$.  The attaching maps or equivalently the cell boundaries are given by removing arcs. This is induced by a simplicial differential for which  all arcs are enumerated first according to the boundary components and then according to their order on the boundary, cf \cite{KLP,hoch1} for more details.

 \subsubsection{Discretization}
 An integer weight for a set of arcs $\a$   is given  a map $\wt:\a\to \N_{>0}$. A discretized cell  is given by $c_{disc}=[(c,\wt)]$. As an arc system, this is represented by replacing each arc $a$ by $\wt(a)$ parallel arcs.
 The differential is again given by removing arcs, which now is a sum of lowering the degree $\wt$ of the arcs in $\a$ by $1$ and removing the arc if the resulting weight is $0$ see \cite{hoch2} for details.

 \subsubsection{Interval/angle marking and action}

An interval is the part of the boundary between two arcs. In \cite{hoch2} the intervals are called angels as they are the angles of the arc graph \cite[1.1.2]{hoch2}.
A marking is a morphisms $\mk:interval\to \{0,1\}$ with the condition that each interval that contains a marked point, also called the module-interval, has value $1$.
 Intervals between parallel arcs are called splitting intervals. For simplicity we will restrict the value of $\mk$ to be $1$  on splitting intervals.
 The other intervals are called inner intervals and the function $\mk$ is completely determined by its value on these.
 The data  $(c,\wt,\mk)$ defines a homogeneous correlation function \eqref{cwtcor}. The correlation functions for a marked cell $(c,\mk)$ is given by summing over all possible weights \eqref{localsumeq},
 with the marking being the one fixed by its value on inner intervals.

Intervals with value $1$ will be referred to as  marked or active and the intervals with value $0$ as unmarked or inactive. This terminology
 avoids a possible confusion as marking by $0$ means decorating by the unit $1\in A$ for the correlation functions.

\subsection{Correlation functions for a cell}
\label{cellcorpar}
\subsubsection{Local OTFT correctors}

Let $S$ be a surface with enumerated  boundary components $\del F=\amalg_{j=i1}^b S^1$ and $d_i$ marked intervals on each boundary component.
An OTFT based on a Frobenius algebra $A$ assigns a correlation function
\begin{equation}
Y_A(S): A^{\ot d_1}\odo A^{\ot d_b}\to k
\end{equation}
which is given by the formula \eqref{asscoreq}. Note that the formula is invariant under cyclic rotations of the tensor factors at each boundary component, and is equivrariant with respect to renumbering the boundary components,see \S\ref{OTFTpar}.
The simplest OTFT correlation functions, which suffice to define the product, coproduct, pre-Lie and braces, are the $Y_A(P_{2n})$ where $P_{2n}$ is a $2n$-gon for which every other side is marked. The general correlation function  specializes to:
\begin{equation}
\label{polygoncoreq}
Y_A(P_{2n})(a_1\odo a_n)=
\la a_1\cdots a_n\ra
\end{equation}

\begin{rmk}
\label{OTFTrmk}
Usually OTFTs are defined as involutive functors $Z$ from a cobordism category, see e.g.\ \cite{KP,LaudaPfeiffer}. The Frobenius algebra is   $A=Z(I)$, $I=[0,1]$. The correlation function $Y(S)$ is the value of $Z$ on $S$ as a cobordism with all intervals being inputs and an empty output. As $Z(\emptyset)=k$ this gives the map above. Vice--versa, since the functor $Z$ is involutive, $A$ and the correlation functions fixes all of $Z$ up to equivalence. When specifying inputs and outputs, one has to be careful with the orders, this explains different versions of the Frobenius equation \eqref{frobeq}. Dualizing inputs and outputs yields the different forms discussed in \S\ref{calcpar}.
\end{rmk}

\subsubsection{Global correlators for a cell}
\label{cellorderpar}
Given $(c,\wt)$ represent each integer weighted arc $a\in \a$ with weight $p$ is represented by $p$ parallel arcs. These decompose the surface into sub--surfaces given by the complementary regions: $\Sigma =\bigcup_{v\in V} S_v$ where the intersections are at the boundaries of the $S_v$  along the arcs, see Figure \ref{coprodfig} for an example.
The set $V$ is the set of vertices of a dual description in terms of almost ribbon graphs, \cite[Appendix A1]{postnikov} and Figure \ref{figure8}.
Let $l_i$ be the number of intervals at boundary $\del_i\Sigma, i=1\kdk n$ marked by $1$ in $(c,\mk,\wt)$, then
\begin{equation}
\label{cwtcor}
Y_A(c,\mk,\wt):=\bigotimes_{v\in V}Y_A(S_v)\circ \sigma: A^{\ot l_1}\odo A^{\ot l_n}\to k
\end{equation}
where $\sigma$ is the shuffle that shuffles the factors of $A$ into their relative position.  We used indexing by sets to make the formula easier.
There is a natural order on $V$ given by enumerating each $S_v$ by the first appearance of an interval that belongs to it.
The intervals themselves, and hence the factors of $A$, are enumerated first by the boundary component and then in their natural orientation starting at the interval containing the marked point, see also \S\ref{standardpar}.
These homogenous components \eqref{cwtcor} sum up to a correlation function
\begin{equation}
\label{localsumeq}
Y_A(c,\mk)=\sum_{\wt} Y_A(c,\mk,\wt)
\end{equation}

\subsection{PROP cells and their action}
\label{propcellpar}
In order to obtain the relevant PROP
one partitions the boundaries of $\Sigma$ into inputs $In_1\kdk In_n$ and outputs  $Out_1\kdk Out_n$  enumerating them separately.
Furthermore, one restricts the arcs to run from input to output only and requires that every input boundary has at least one incident arc. This is the Sullivan   quasi PROP
$\Diioarc$.
Lastly, one retracts the cells to a normalized version by scaling the coordinates so that the sum of barycentic coordinates separately at each input boundary  is $1$.
Let $s_i$ be the number of arcs incident to $In_i$, then the retracted  cell $c_1$ is a product of simplices $\Delta^{s_1-1}\times \dots \times \Delta^{s_n-1}$.
These cells make up the cell complex called $\Diioarc_1$, see  \cite{hoch1} for details.

\subsubsection{Standard marking and action}
\label{standardpar}
Each PROP cell
has a standard marking dictated by the input/output designation, cf.\ \cite{hoch2}. All module intervals are active.
All inner input intervals are active and all inner output intervals are inactive.  This defines $Y_A(c_1)$.
The operation has degree $\dim(c_1)$ which is the number of  input intervals not containing the base point.
The main result for this complex is that the cellular chains have a dg--action on $CH^*(A,A)$ see\cite[Theorem B]{ hoch2}. Here a cell $c_1$ with $n$ inputs boundaries and $m$ output boundaries
 acts via the operation $op_{CH}(c_1):CH^{\ot n}\to CH^{\ot m}$ with the graded components given in \eqref{opeq}.

The standard order is as follows: The module variable is assigned to the module--interval. This is followed in the linear order of $\bar TA$ according to the following rules.
 The input intervals are enumerated in  opposite order to the orientation and the output intervals are enumerated according to the orientation.

\subsubsection{Standard decomposition}
\label{standarddecomppar}
There are several ways to find standard decompositions of the operations into standard operations. The most useful for $CH^*(A,A)$ being the following:

\begin{thm}\cite[Proposition 4.13]{hoch2}
All the operations of the Sullivan PROP or even those of moduli spaces are expressible in terms of shuffles, deconcatenation coproducts $\diamond$ (on $TA$) and integrals.
\end{thm}

This kind of decomposition can be rewritten easily in other contexts,
to lift operations to the various versions of $CH$.
If one has a different context, then one should simply keep track of the fact that in \eqref{standardeq} the leading tensor is the one from the coefficients and which form of the operation
defined by the integral in the Frobenius case is being used. See \S\ref{calcpar}.
Sample considerations are given in \S\ref{gendisccopar}, \S\ref{geocoprodpar}, \S\ref{bdgeopar}.
\begin{rmk}
The deconcatenation coproduct $\diamond$ is a coproduct for the following monidal structure on $A$-$Mod$-$A$: $M\boxtimes N= M\ot_k A \ot_k N$.
A similar coproduct appears in \cite{Weibel} as a cotensor product in a different, but maybe not un--related, context.
This is the coproduct for the inputs corresponding to the interval marking by $1$. On the outputs, the product is the simple tensor product. Using the unit $u:k\to A$,
 there is an embedding $M\ot N\to M\boxtimes N$, which is precisely the application of the degeneracy maps.

From a simplicial point of $\boxtimes$ corresponds to the join, see \S\ref{joinrmk} and \S\ref{fig8par}. The Joyal dual monoidal product ``$+$'' ---cf.\ e.g.\ \cite[Appendix B]{HopfPart1}
and \cite[\S3.5]{HopfPart1} for explicit formulas---
 is what is used on the outputs.
\end{rmk}

 \subsubsection{Remarks on PROP and quasi PROPs}

Although the relevant structures on the chain level are PROPs, that are strictly associative, on the topological level there are two complication.
The first is, that there is a rescaling involved. This is possible without penalty for the operad part as a global scaling and expressed as a bicrossed product with a scaling operad \cite{cact}.
For the multi-gluings in the PROP one has to perform local scalings and this results  in associativity only being up to homotopy, which is the content of the notion of quasi PROP \cite[Definition 5.22]{hoch1}.
The explicit homotopies are controlled by rather intricate flows on the geometric level  \cite{KP} that even work in the more general modular operad setting.
The second complications, which already appeared in the operad part, \cite{del,cyclic} is that in order to obtain a cell complex with cells of the right dimension,
one needs to retract to a smaller complex given by normalization, which is also a local scaling. Hence the second problem and the first one are of the same ilk.  Already the normalized operad is only a quasi operad \cite[1.1.1 Definition]{cact}.
The full statements for the topological level are contained in \cite[Theorem D]{hoch1}.

\subsubsection{Geometry of the PROPs on the topological level}
\label{arcgeopar}
The cell itself can be viewed in different ways as giving ``geometric actions''. Here it is helpful to regard the dual graph, see Figure \ref{figure8}. The blue graph is the image of the map $\Loop$ of \cite[Definition 4.3]{KLP}
which  identifies the points of the various in and output circles using a foliation, see \cite{KLP,KP} for more details.
This means that the outside circle gets identified with the figure 8 configuration in such a way that all the base points coincide and the length of the two parts is given by $1-t$ and $t$, yielding a 1--parameter family. The length in the picture is given via a partially measured foliation indicated by parallel lines.
The extra tails or spines give the base points and the dotted spine keeps track of  the polycyclic structure \cite[Appendix A]{postnikov} in which the extra tail pointing to the ``lone loop'' is a cycle by itself.
There is no extra genus, but an extra boundary component with one interval, which is a module-inverval.  Geometrically this means that there is a second constant loop that is identified with the input base point.
The polycyclic structure and extra markings appear in the combinatorial compactification of moduli space, \cite{hoch1} that  was axiomatized  with graphs in \cite{postnikov} using polycyclic graphs aka.\ stable ribbon graphs \cite{konteurope}. It also related to non--Sigma modular operads \cite{Marklnonsigma, decorated,BergerKaufmann,feynmanrep}. The extra decoration manifests itself in the action, which does  {not} only involve e polygon correlators.
The interpretation of the partially measured foliations as moving pieces of string according to \cite[\S5.11]{woods} is in Figure \ref{cellfig}, which also illustrates the time reversal.

This is also exactly the action that is induced on loop spaces algebratized in \S\ref{looppar}.  The totalization is the discretization of the map $\Loop$.
By \S\ref{geocoprodpar} and \S\ref{bdgeopar}
is exactly realized via an intersection interpretation of \S\ref{intpar}  co--simplicially on the loop spaces, cf.\ \S\ref{simppar}.

\section{Calculations}
\label{calculationspar}
\subsection{Correlators}
\label{calcpar}
We give the dualizations for the functions $\int_n:=\eps \circ \mu^{[n]}:A^{\ot n}\to k$ given by $\int_n a_1\odo a_n=\la a_1\cdots a_n\ra$ ---where $\mu_A^{[n]}$ is the iterated multiplication.
This defines different forms of the operation, and we discuss which  of these forms may exist,
without the Frobenius assumption.  These forms may break the cyclic symmetry.
The tensor factors may simply be a $k$-module $V$ or its dual $\check V$, an  associative algebra $A$, or  a coassociative coalgebra $\CA$, e.g.\ $\check A$ for $A$ finite dimensional or of finite type. We will tacitly assume such a condition, when we use $\check A$ as.a coalgebra variable. Subscripts  indicate modules, e.g. $\bisub{A}{M}{A}$ stands for an $A$-$A$-bimodule $M$.
 The idea is that there is a hierarchy of operations on tensors: shuffles, contractions, multiplication, comultiplication, actions. This is the point of
 view underlying \cite{Gerstenhaber} and \cite{hoch2}.

\subsubsection{Relaxations of the Frobenius condition for $\int_n$}
\label{viewpar}
The basic form $\int_{n+1}$ exists for an  associative algebra $A$ with a morphism $\eps:A\to k$.  This is cyclic if $\eps$ is a trace.
It  also exists  for an algebra $A$ as a morphism $ev\circ \mu_A^{[n]}:\check A\ot A^{\ot n}:\check a_0\ot a_1\odo a_n\mapsto \check a_0(a_1\cdots a_{n})$
 as the multiplication followed by the dual paring, aka.\ evaluation.  The latter form is basis of the action of the little discs \cite{del}, since the restriction on the surfaces says that all the regions are polygonal and have one distinguished coalgebra tensor.
Note,

Dualizing in the last slot generally yields the iterated multiplication map $\mu_A^{(n)}\in Hom(A^{\ot n},A)$: $\mu_A^{(n)}(a_1\odo a_n)=a_1\cdots a_n$ which is defined for any associative algebra $A$.
Dualizing all but the first entry yields the  iterated co--multiplication $\Delta_{\CA}^{(n)}\in Hom (\CA,\CA^{\ot n})$ which exists for any coalgebra $\CA$.
Dualizing all entries yields the element $\Delta^{(n+1)}(1)$ which exists for a pointed coalgebra $(\CA,1)$.
\subsubsection{Calculations for low $n$}
\label{explicitpar}
\begin{list}{(\arabic{lstcounter})}{\usecounter{lstcounter}\leftmargin=20pt}
\item
\label{int2}
$\int_2=\eta$: Interpreted as a morphisms $A\to A$ this is $id_A$. As a morphism $k\to A^{\otimes 2}$ this is $C=\Delta(1)$ that is the Casimir element dual to the form.
 \\
{\sc Restrictions:} The form $\eta$ is simply the dual paring and exists as the evaluation map $ev:\check V\otimes V\to k$. The form $id_V:V\to V$ only needs a $k$--module $V$.
 As $k\to \check V\ot \check V$ is is a bilinear form. In the form $k\to V\ot V$ it is simply a fixed element ---sometimes called a propagator---
which is needed to operadically compose correlation functions \cite[\S2]{hoch2}.

\item
\label{int3}
$\int_3$:
By dualizing in the third slot, this represents $\mu_A\in Hom(A^{\otimes 2},A)$.  \begin{equation}
a\ot b\mapsto
(\int_3 a\ot b\ot \Cp)\ot \Cpp
= \sum \la ab,\Cp\ra  \Cpp =
ab=\mu(a,b)
\end{equation}
By dualizing in the second and third slot this yields $\Delta_{\CA}\in Hom(\CA,\CA^{\otimes 2})$.
\begin{equation}
\begin{aligned}
a\mapsto&\sum_{C_1,C_2} (\int_3a \ot \Cp_1\otimes \Cp_2)\; \Cpp_1 \otimes \Cpp_2
=\sum_{C_1,C_2}  \la a, \Cp_1\Cp_2\ra \Cpp_1\otimes \Cp_2\\
& =\sum_{C_1,C_2} \la \Delta(a), \Cp_1\otimes \Cp_2\ra\;  \Cpp_1 \otimes \Cpp_2
 = \Delta(a)
\end{aligned}
\end{equation}

\item
\label{int4}
 $\int_4$: We will give the dualization in the 2nd and 4th slot. The map $A^{\ot 2}\to A^{\ot 2}$ is
\begin{equation}
\begin{aligned}
&a\ot b \mapsto \sum_{C_1,C_2}(\int_4 a \ot\Cp_1 \ot b \ot \Cp_2)\;
\Cpp_1\otimes \Cpp_2
=\sum_{C_1,C_2}\la a,\Cp_1 b \Cp_2\ra\, \Cpp_1 \otimes \Cpp_2\\
&=\sum_{C_1,C_2}\la\Delta_A(a),\Cp_1\otimes b \Cp_2\ra \;\Cpp \otimes \Cp_2
=\sum_{C_1,C_2}\la\Delta_A(a)(1\otimes b),\Cp_1\otimes \Cp_2\ra \;\Cpp_1 \otimes \Cp_2\\
&=\sum_{C_1,C_2}\la\Delta_A(a),(1\otimes b)(\Cp_1\otimes \Cp_2)\ra \;\Cpp_1 \otimes \Cp_2
=\Delta(a)(1\otimes b) =a\sw \ot a\sww b
\end{aligned}
\end{equation}
This form exists as a morphism $\CA_A \ot A\to \CA_A\ot {\CA}_A$.
i.e.\ $a$ is  a  decoation by a coalgebra element, where the coalgebra is a right $A$ module.
 and $b$ is am algebra element.
Using \eqref{deltaeq} also  $\Delta(a)(1\otimes b)=\D(1) (a \ot b)=C (a \ot b)$, where now there are no restrictions of $a,b$
at first, but there has to be some module structure. E.g. if $a,b\in A$ and $C\in \check A\ot \check A$
then this is a morphism $A\ot A\to \check A\ot \check A$ etc..  Switching the roles of $a$ and $b$, one also has the form: $A\ot \leftsub{A}{\CA}\to  \leftsub{A}{\CA}\ot  \leftsub{A}{\CA}$:
\begin{equation}
\label{homcoefeq}
a\ot b\mapsto ab\sww \ot b\sw=(a\ot 1)\D_{\CA}^{op}(b)
\end{equation}

\item
\label{int5}
$\int_5$, we compute the dualization in the 2nd and 4th slot: The map $A^{\ot 3}\to A^{\ot 2}$ is:
\begin{equation}
\begin{aligned}
&a\ot b\ot c\mapsto \sum_{C_1,C_2}(\int a \Cp_1 b \Cp_2 c) \,\Cpp_1 \otimes \Cpp_2=\sum_{C_1,C_2}\la \Cp_1 b \Cp_2 c, a\ra \, \Cpp_1 \otimes \Cpp_2\\
&=\sum_{C_1,C_2}\la(\Cp_1 b) \otimes (\Cp_2 c),\D_A(a)\ra \,\Cpp_1 \otimes \Cpp_2
=\sum_{C_1,C_2}\la \Cp_1 \otimes \Cp_2, (b\otimes c) \D_A(a)\ra \,\Cpp_1 \otimes \Cpp_2\\
&=(b\ot c)\Delta(a)=\sum ba^{(1)}\ot c a^{(2)}
\end{aligned}
\end{equation}
which exists as a morphism  $\leftsub{A}{\CA} \ot A\ot A\to \leftsub{A}{\CA}\ot \leftsub{A}{\CA}$.

From \eqref{deltaeq} it follows that:
$
 (b\ot c)\Delta_A(a)=(b\ot c)\D_A(1)(a\ot 1)=(b\ot ca)\D_A(1)
$.
\end{list}
\subsection{OTFT from a Frobenius algebra}
\label{OTFTpar}
We will now show that Assumption 4.1.2 \cite{hoch2} of commutativity of $A$ is not necessary and that the equations of Remark 4.2 \cite{hoch2} hold for any Frobenius algebra. Let $A$ be a Frobenius algebra
as in \S\ref{frobpar}.
Since $\int$ is cyclic, we have that
\begin{equation}
\label{cyceq}
\forall 1\leq i,j\leq n: \la a_i\cdots a_n a_1\cdots a_{i-1}\ra=\la a_j\cdots a_n a_1\cdots a_{j-1}\ra
\end{equation}
Using that $a=\sum  \la a\Cp \ra \Cpp$, we get the factorization
\begin{equation}
\label{spliteq}
\la a_1\cdots a_n\ra =\sum \la a_1\cdots a_i\Cp\ra\la\Cpp a_{i+1}\cdots a_n\ra
\end{equation}

\begin{prop}
\label{cyclicprop}
 Let $A$ be a Frobenius algebra. Using the notation of \S\ref{frobpar} and the one above:
For all $1\leq i,j\leq n$ and $1\leq k,l\leq m$:
\begin{equation}
\label{annuluseq}
\begin{aligned}
&\sum \la a_1\cdots a_i\Cp_1 b_k\cdots b_mb_1 \cdots b_{k-1}\Cpp_1 a_{i+1}\cdots a_n\ra\\
&=\sum\la a_1\cdots a_j\Cpp_2 b_l\dots b_m b_1\cdots b_{l-1} \Cp_2 a_{j+1}\cdots a_n\ra
\end{aligned}
\end{equation}
Also,
\begin{equation}
\begin{aligned}
\label{toruseq}
 & \sum \la a_1\cdots a_i\Cp_1a_{k+1}\cdots a_l\Cp_2 a_{j+1} \cdots a_k\Cpp_1 a_{i+1}\cdots a_j\Cpp_2 a_{l+1}\cdots a_n\ra\\
&=  \sum \la a_1\cdots a_n\Cp_1\Cp_2 \Cpp_1  \Cpp_2\ra
\end{aligned}
\end{equation}
\end{prop}

This fact is well known, albeit maybe not in this presentation,
as it is equivalent the the theorem that 2d Open Topolgical Field Theories are
equivalent to Frobenius algebras, see Remark \ref{OTFTrmk}
The two equations correspond to cuts for the annulus and the torus with one boundary, see Figure \ref{gluefig}.
\begin{figure}
\includegraphics[width=.65\textwidth]{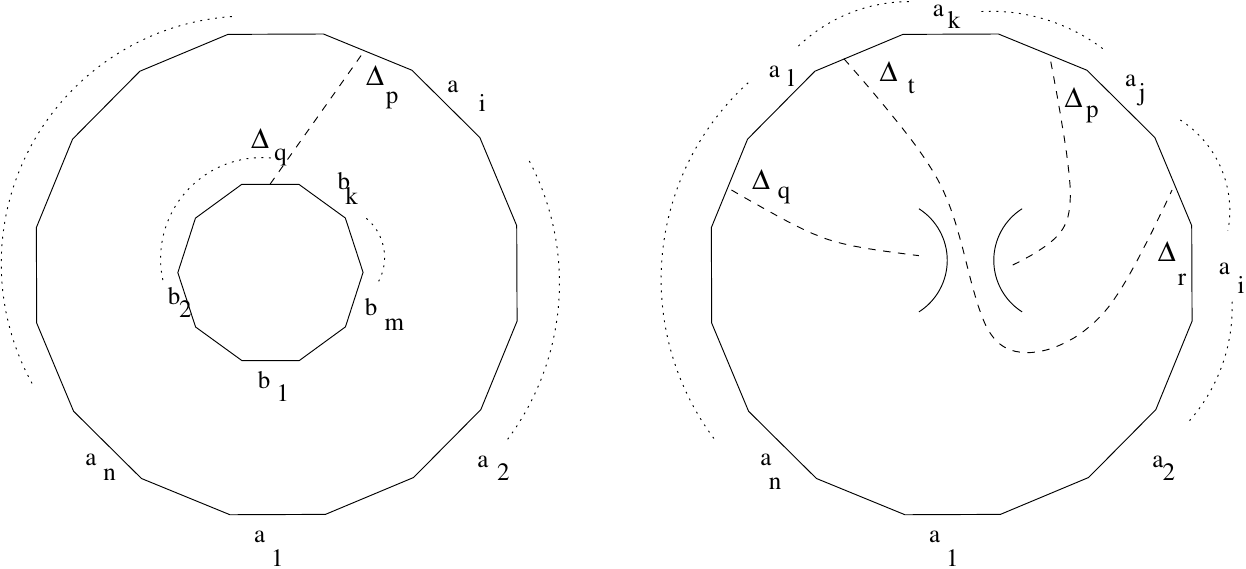}
\caption{\label{gluefig} The cut on the annulus corresponding to \eqref{annuluseq} and the cuts on the torus with one boundary component \eqref{toruseq}.
The equations say that the choice of endpoints of
the cuts does not matter.
}
\end{figure}

\begin{proof}  For \eqref{annuluseq} Assume wlog $i<j$ and $k<l$
\begin{equation*}
\begin{aligned}
&\sum\la a_1\cdots a_i\Cp_1 b_k\cdots b_mb_1 \cdots b_{k-1}\Cpp_1 a_{i+1}\cdots a_n\ra\\
&=\sum_{C_1,C_2}\la a_{j+1}\cdots a_na_1\cdots a_i\Cp_1 b_k\cdots b_{l-1}\Cp_2\ra
\la\Cpp_2
b_l \cdots b_{m}b_1\cdots b_{k-1}\Cpp_1 a_{i+1}\cdots a_j\ra\\
&=\sum\la a_1\cdots a_j\Cpp_2 b_l\dots b_m b_1\cdots b_{l-1} \Cp_2 a_{j+1}\cdots a_n\ra
\end{aligned}
\end{equation*}
where in the first step we used \eqref{cyceq} and then \eqref{spliteq} to first rotate until $a_j$ is at the end and then split.
In the second step, we rotated both expressions with \eqref{cyceq} so that $\Cp_1$ is on the right and $\Cpp_1$ is on the left
and then used \eqref{spliteq} to merge them.
For \eqref{toruseq}
\begin{eqnarray*}
&&  \sum_{C_1,C_2}\la a_1\cdots a_i\Cp_1a_{k+1}\cdots a_l\Cp_2 a_{j+1} \cdots a_k\Cpp_1 a_{i+1}\cdots a_j\Cpp_2 a_{l+1}\cdots a_n\ra\\
&=&  \sum_{C_1,C_2}\la a_1\cdots a_i a_{i+1}\cdots a_j\Delta_ p a_{l+1}\cdots a_n\Cp_2 a_{k+1}\cdots a_l\Cpp_1a_{j+1} \cdots a_k \Cpp_2\ra\\
&=& \sum_{C_1,C_2} \la a_1\cdots a_i a_{i+1}\cdots a_ja_{j+1} \cdots a_k \Cp_1\Cp_2 a_{l+1}\cdots a_n\Cpp_1a_{k+1}\cdots a_l \Cpp_2\ra\\
&=& \sum_{C_1,C_2} \la a_1\cdots a_i a_{i+1}\cdots a_ja_{j+1} \cdots a_ka_{k+1}\cdots a_l\Cp_1 \Cp_2\Cpp_1  a_{l+1}\cdots a_n\Cpp_2\ra\\
&=& \sum_{C_1,C_2}\la a_1\cdots a_i a_{i+1}\cdots a_ja_{j+1} \cdots a_ka_{k+1}\cdots a_la_{l+1}\cdots a_n\Cp_1\Cp_2\Cpp_1  \Cpp_2\ra\\
\end{eqnarray*}
where we used used \eqref{annuluseq} to move each block not yet in place by one in each step.
\end{proof}
\begin{cor}
\label{removeassumptioncor}
The assumption of commutativity \cite[Assumption 4.1.2]{hoch2} is unnecessary and  all the operations of \cite{hoch2} are defined for the Hochschild cochains of any associative Frobenius algebra $A$. The local correlation function $Y(S)$ in (4.3) of \cite{hoch2} for a surface $S$ of genus $g$ with $b$ decorated boundaries  where the $i$-th boundary is decorated by elements $a_1^i,\dots, a^i_{n_i}$  the non--commutative case is:
\begin{equation}
\label{asscoreq}
Y(S)(\bigotimes_{j=1}^b\bigotimes_{i=1}^{n_j}  a_i^j)=
\la (a_1^1\cdots a_{n_1}^1)\prod_{l=2}^{b} \left(\sum\Cp_l a_1^l\cdots a_{n_l}^l \Cpp_l
  \right)\prod_{k=1}^g\left(  \sum_{C_{k_1}C_{k_2}} \Cp_{k1}\Cp_{k2}\Cpp_{k1}\Cpp_{k2}
 \right)\ra
\end{equation}
which is simply the correlation function $Y_A(S)$ of the 2d--OTFT of the marked surface for the OTFT defined by $A$.
\end{cor}
\begin{proof}
The operations {\it a priori} depend on a choice of triangulation by extra arcs/cuts. Since the two equations \eqref{annuluseq} and \eqref{toruseq} hold, the result is independent of such a choice as they can be used to put the cuts into a standard position yielding \eqref{asscoreq}. This follows from the transitivity of Whitehead moves on triangulations. By the gluing axioms of an OTFT this is the correlation function corresponding to the given surface.

\end{proof}
Note that \eqref{asscoreq} seems to depend on the enumeration of the boundary components but the result is independent of that ordering, again by applying \eqref{annuluseq}. By the same equation, it also only depends on the cyclic order of the elements at each boundary. There is a standard order of all the elements given by the fact that the boundary components are labelled.

\subsubsection{Pseudo-commutative Frobenius algebras}
We call $A$ pseudo--commutative if $\mu\Delta(ab)=e ab$.
\begin{lem}
If one of the following conditions holds,  $A$ is pseudo--commutative:
(1) $A$ is commutative, or (2) $A$ is  graded Gorenstein, or  (3)  $\Delta_A(1)(a\ot 1)=\Delta_A(1)(1\ot a)$, or (4)  $(1\ot a)\D_A(1)=(a\ot 1)\D_A(1)$.
\end{lem}
\begin{proof}
$A$ is pseudo--commutative iff  $\sum \Cp a \Cpp b=\sum \Cp\Cpp ab$, which is the case if $A$ is commutative.
If $A$ is graded Gorenstein of degree $d$ then degree $\sum \Cp a \Cpp b$  is of degree at least $d$ and unless  $deg(a)=deg(b)=0$  both sides are $0$.
As all elements in degree $0$ lie in the center, the equation holds.

For (3) using \eqref{deltaeq}:
 $\sum \Cp a \ot \Cpp b= \D_A(1)(a\ot 1)(1\ot b)=\D_A(1)(1\ot a)(1\ot b)=\D_A(1)(1\ot ab)=\sum \Cp\ot \Cpp ab$ which after applying $\mu$ to both sides yields the defining equation.
If $A$ is graded Gorenstein of degree $d$ then degree $\sum \Cp a \Cpp b$  if of degree at least $d$ and unless  $deg(a)=deg(b)=0$ or both sides are $0$.
As all elements in degree $0$ lie in the center, the equation holds. The case (4) is similar.

\end{proof}
\begin{lem}
In case that $A$ is pseudo--commutative, equation (4.3) of \cite{hoch2} holds, that is
\begin{equation}
\label{comcoreq}
Y(S)(\bigotimes_{j=1}^b\bigotimes_{i=1}^{n_j}  a_i^j)=
\la\prod_{j=1}^b\prod_{i=1}^{n_j}  a_i^j e^{-\chi(S)+1}\ra
\end{equation}
\end{lem}
\begin{proof} If $A$ is pseudo--commutative, we can move all the factors $\Cp_i\Cpp_i$ next to each other to the right.   $\Cp\Cpp=\mu\Delta(1)=e$ and there are $b-1+2g=-\chi(S)+1$ such factors.
\end{proof}
\begin{rmk}
\label{vanishrmk}
 Note that if $A$ is graded Gorenstein, for degree reasons, \eqref{comcoreq} is $0$ unless $-\chi(S)+1\leq 1$ and if $\chi(S)=0$, that is $S$ is an annulus, then all the $a_i^j$ must be of degree $0$, so that in this case, the correlation the function vanishes modulo the constants $A_0$.
 \end{rmk}

\subsubsection{Stabilization and the semi--simple case}

We call $A$ {\em E-unital} if $e=1$. In this case
 $\eps(a)=tr(L_a)$ where $L_a$ is the left multiplication by $a$, as $\eps(a)
 =\sum \la a, \Cp\Cpp\ra=\sum \la a\Cp,\Cpp\ra=\eps(tr(L_a))$.

 \begin{lem}
$A$ is commutative and $E$--unital if and only if $A$ is {\em isometric}, i.e.\ $\Delta_A\mu_A= id_A$.
 \end{lem}
 \begin{proof}
``$\Rightarrow$'': \eqref{deltaeq} implies that if $A$ is commutative and E-unital, it is isometric.\\
``$\Leftarrow$'': By the assumption $1=\mu\Delta(1)=e$ and using \eqref{deltaeq}, $\mu(\Delta_A(a))=\mu(\Delta_A(1)(a\ot 1))=\sum \Cp a \Cpp=a$. Using this and  Proposition \ref{cyclicprop}, $\la ab,c\ra=\sum \la \Cp ab\Cpp c\ra=\sum \la \Cp b a\Cpp c\ra=\la ba,c\ra$.
 \end{proof}
\begin{rmk}
 If $A$ is isometric, then the operations pass through the stabilization \cite{postnikov} and contain an $E_{\infty}$ structure \cite{ROMP}. \end{rmk}

 \begin{rmk}[Semi-simplicity]
If $A$ is free of finite rank and semi--simple, there is a basis $e_i$ with $e_ie_j=\delta_{ij}e_i$. This implies that $\sum_i e_i=1$. Setting $\lambda_i=\eps(e_i)$, $e^i=\frac{1}{\lambda_i}e_i$, and  $e=\sum_i \frac{1}{\lambda_i} e_i$ is invertible with inverse
$e^{-1}=\sum_i \lambda_i e_i$.
If all $\lambda_i=1$, which is sometimes called normalized semi--simple, then  $A$ is E--unital.
In case $A$ is semi--simple, there is a flow to a normalized $A$, see \cite{ROMP}.
In \cite{Abr} it is shown that if $A$ is even commutative, then being semi-simple is equivalent to $e$ being invertible.
\end{rmk}

\section{Duallities and further topics}
\label{furthertopicspar}
\subsection{Dualities}
\label{dualitypar}

\subsubsection{Na\"ive duality}
\label{naivedualpar}
The operations were defined by dualizing the  arguments of $\hat Y$, see \S\ref{conversionpar}.
The choice of inputs and outputs is dictated by the cell. One can ask about the other forms of the operation $\hat Y_{CH}$ as $\bar TA$ is graded isomorphic to its dual.
As operations these always exists, but their PROP structure is more complicated, see \S\ref{KWangpar}.

Switching all inputs to outputs  for the PROP one obtains an na\"ive input/output dual operation that is an $(m,n)$-ary operation from every $(n,m)$-ary operation via $Hom(CH^{\ot n},CH^{\ot m})\simeq
Hom(CH^{\ot m+n},k)\simeq Hom(CH^{\ot m},CH^{\ot n})$.

\begin{ex}
For instance, the degree $0$ product is dual to a degree $0$ coproduct, which is different from the natural degree $1$ product. It corresponds to the pointwise co-product, of \cite{Sopenclosed}.
Similarly the degree $1$ coproduct is dual to a degree $1$ product which shares the same correlation function \eqref{Ycopeq}. This is the sum over the products $\sqcup$, cf.\ \cite[\S4.1.1, eq.\ (4.10)]{hoch2},  where the degree $n\mapsto (p,q),n=p+q+1$ part of the coproduct dualized to $\sqcup$ in degree $(p,q)\mapsto p+q+1=n$.
\begin{equation}
f\sqcup g(a_1\odo a_n)=f(a_1\odo a_p)a_{p+1}g(a_{p+2}\odo a_n)
\end{equation}
 To obtain the usual cup product, one needs to apply a degeneracy, that is set $a_{p+1}=1$.
\end{ex}

\subsubsection{Time reversal symmery (TRS)}
\label{TRSdualpar}
Given a cell  $c$ represented by an arc family and a boundary input output marking, we define the TRS dual $\check c$ by reversing the ``in'' and ``out'' labels.
 This  {\em changes the normalized cell} and the interval marking in the discretized PROP.

For a cell $c$ with arcs only from inputs to outputs in which all boundaries are hit, that is a cell of $\ioarc$ in the notation of \cite{hoch1}, the TRS dual $\check c$ is also a cell of $\ioarc$, and hence retracts to a normalized cell $\check c_1$ of  $\Diioarc_1$. If $c_1$ is the normalized cell of $c$ then the TRS dual of the operation $op(c_1)$ is $TRS(op(c_1)):=op(\check c_1)$.
Unlike the na\"ive dual, the  TRS dual operation usually has different degree.
The degree of a normalized cell with $n$ inputs and $m$ outputs, and thus the degree of the corresponding operation, is $\#$ arcs $-$ $n$.
 The degree of the TRS dual  which is an $m$ to $n$ operation is  $\#$ arcs $-$ $m$. Thus the degree difference of the operations is $m-n$.

\begin{rmk}
\label{joinrmk}
This can simplicially be understood as two different join decompositions. A cell $c=(\Sigma, \alpha)$ of $\Diioarc$     is a $\Delta^{|\a|-1}$. If $\Sigma$ has $n$ inputs and $m$ outputs, then there are two partitions of $\a$ $\a=\amalg_{i=1}^n \a_i$ and $\a=\amalg_{j=1}^m\a'_j$. This gives rise to two join decompositions $[\a_1-1]*\dots *[\a_n-1]
= [\a-1]=[\a'_1-1]*\dots *[\a'_m-1]$. The normalization drops the $*$ and replaces it with the polysimplicial product.
\end{rmk}

\begin{ex}
The cell $c_\D$ for the comultiplication is the TRS dual of the cell $c_\mu$ for the multiplication, which has $2$ arcs, see Figure \ref{cellfig}.
The $(2,1)$ multiplication of degree $2-2=0$ has as TRS dual the $(1,2)$ comultiplication of degree $2-1=1$.
\end{ex}

Several interesting cells appear as homotopies for the Gerstenhaber and BV structure \cite{KLP}. Their TRS duals give new interesting homotopies for the TRS dual operations.
\begin{ex}
\label{dopex}
The TRS dual for the pre-Lie or Gerstenhaber product gives a homotopy relation for $\D$ and $\D^{op}$.
$
\Delta_{CH}+\Delta^{op}_{CH}\sim C
$
where $C$ only has components $C:CH^n\to CH^{n-1}\ot CH^0$ which,  using notation as in Theorem \ref{coprodthm}, are given by
\begin{equation}
\begin{aligned}
& C(f^n)(a_1\odo a_{n-1}\ot \lambda)
=\sum_{i=1}^n \pm \lambda f(a_1\odo, a_{p-1},\Cp_1,\a_{p}\odo a_{n-1})\ot \Cpp_2\Cpp_1\Cp_2
\end{aligned}
\end{equation}
This follows from
applying the general procedure laid out in \S\ref{cellpar} to Figure \ref{Deltahomotopyfig}.
The cell given in Figure \ref{Deltahomotopyfig} gives that homotopy of the sum of the two operations to the operation of the base side.
 Since the boundary of the operation $C$ and $\Delta+\Delta^{op}$ is $0$ they are  cohomology operations.

 Note that in the graded Gorenstein case, by degree reasons, operation equals
\begin{equation}
\sum_{i=1}^n \pm \lambda f(a_1,\odo, a_{p-1},e_{top},\a_{p}\odo a_{n-1})\ot e_{top}\end{equation}
This is a sort of Poincar\'e dual degeneracy map,
which geometrically corresponds to spawning off a loop at some point of the loop.
This operation is zero in the reduced $\widetilde{CH}^*(A,A)$ in the Gorenstein case.
 \begin{figure}
  \includegraphics[align=b,width=.6\textwidth]{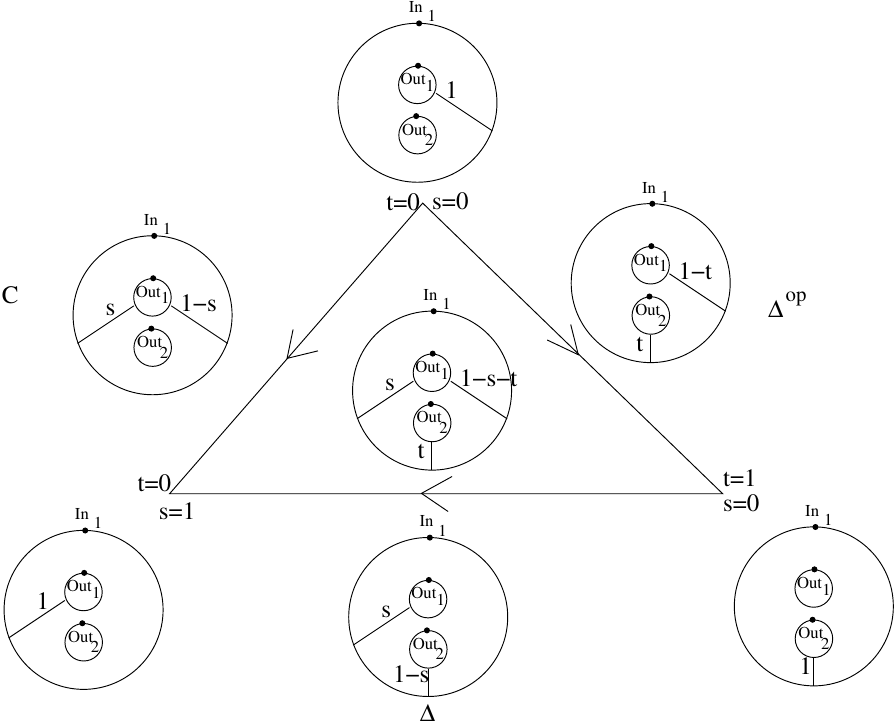}\hskip .7cm
  \includegraphics[align=b,width=.14\textwidth]{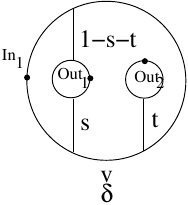}  \hskip .7cm
  \includegraphics[align=b,width=.12\textwidth]{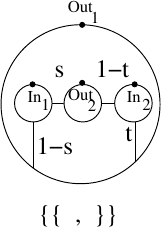}
\caption{\label{Deltahomotopyfig} The TRS dual of the pre--Lie product as a homotopy, the TRS dual of the operator $\delta$ and the double bracaket.}
\end{figure}
\end{ex}

\begin{ex}  Using the TRS dual of $\delta$ of \cite[\S2.2, Figure 7]{KLP}, see Figure \ref{Deltahomotopyfig} one obtains the following relation for the BV operator, see Figure \ref{animatedfig} ---denoted by BV here to avoid confusion with the coproduct:
$\Delta_{CH}(BV(f))= \check\del(f)+\tau_{12}\check\del(f)$, where
\begin{equation}
\begin{aligned}
\hat Y(\check \delta)(a_0\odo a_n)
 =&\sum_{p=1}^{n-2}\sum_{m=0}^{n-2}\pm
\eps(a_0) ( a_p\Cp \ot a_1 \odo a_{p-1} \ot a_{p+m+2}\odo a_n)\\
&\ot (a_{p+m+1}\Cpp \ot a_{p+1}\odo a_{p+m})
 \end{aligned}
\end{equation}
\end{ex}

Note that the TRS dual of the homotopies for the Gerstenhaber structure  and the BV property \cite[Figures 10,11,12]{KLP} should also yield interesting operations.
\subsubsection{Treating empty boundaries}

More generally, there can be empty output boundaries allowing to
``bubble off constant loops'', while ``in'' boundaries all  have to be hit.
Upon reversal, this condition gets switched, to all ``out'' boundaries are hit.
The correlation function is well defined as well in this case, by using the standard marking.
This may lead to  additional factors of $A$, which in the loop space operations stem from the inclusion of constant loops.
 These operations are also in the TRS dual  PROP where the incidence conditions on the arcs on the input and outputs are switched.
 The TRS dual operations  and correlations functions are summarized in Table \ref{TRStable}

\begin{table}
\begin{tabular}{l|l|l}
&in$\to$ out&out $\to$ in\\
\hline
No empty boundary&$Hom(CH^{\otimes n},CH^{\otimes m})$&$Hom(CH^{\otimes m},CH^{\otimes n})$\\
Correlation function&$Hom(CH^{\otimes n+m},k)$&$Hom(CH^{\otimes n+m},k)$\\
$r$ empty boundaries&$Hom(CH^{\otimes n},CH^{\otimes m}\otimes A^{\otimes r})$&$Hom(CH^{\otimes m}\otimes A^{\otimes r},CH^{\otimes n})$\\
&$\subset
(CH^{\otimes n},CH^{\otimes m+r})$&$\subset Hom(CH^{\otimes n+r},CH^{\otimes m})$\\
Correlation function&$Hom(CH^{\otimes n+m}\otimes A^{\otimes r},k)$&$Hom(CH^{\otimes n+m}\otimes A^{\otimes r},k)$\\
&$\subset Hom(CH^{\otimes m+n+r},k)$&$\subset Hom(CH^{\otimes m+n+r,}k)$
\end{tabular}
\caption{\label{TRStable} The TRS duals of operations and their correlation functions.}
\end{table}


\subsection{Further topics}
\label{furtherpar}
Note that in this setting the intervals between parallel arcs all belong to quadrilaterals and the relevant form of correlation function $\int_2$ is $id$ passing on the variable, see \S\ref{explicitpar}\eqref{int2}.
The interesting part of the action is therefore on the surfaces that are defined by the original, not replicated, arc system.  The original choice  is to use OTFTs and the pairing, we will briefly discuss other choices.
A fuller discussion is relayed to \cite{branetop}

\subsubsection{Animation}

In \cite{tsyganbook} a so called animation is introduced. This is naturally incorporated into the present framework.
 Given and $A$ module $ {M}$ and a  morphism $f:A\to A$, one has the new module structure $\rho_f(a,m)=f(a)m$.		
In this way one can twist the coefficient bimodule  $\bisub{A}{M}{A}$ by $f$. Furthermore one can act on $TA$ by powers of $f$ thus inducing new twisted Hochschild complexes.
In the presented formalism, this simply means allowing to replace the propagator for the quadrilaterals  to be given by $f$. In particular, given a set if maps $\mathcal{F}$,  we define an $\CF$--marking for
an arc system to be a map $f:\a\to \CF$. We define the operation for a $\CF$ marking for a cell $c$ to be given by $f(a)$ on the quadrilaterals.
In the general case, one should use self--adjoint maps $f$ and set the marked quadrilateral function to be $\la a, f(b)\ra$ on quadrilaterals marked by $f$. The form which used to be $id_A$ then becomes $a\mapsto f(a)$.
Figure \ref{animatedfig} shows the twisted $B$ operator which is a homotopy from $id_{A \ot TA}$ to $id_A\ot f^\ot$ and an example.
The operation is
\begin{equation}
a_0\odo a_n\mapsto \pm \sum \eps(a_0) \, a_p \ot f(a_{p+1})\odo f(a_n)\ot a_1\odo a_{p-1}
\end{equation}

\begin{figure}
 \includegraphics[width=.9\textwidth]{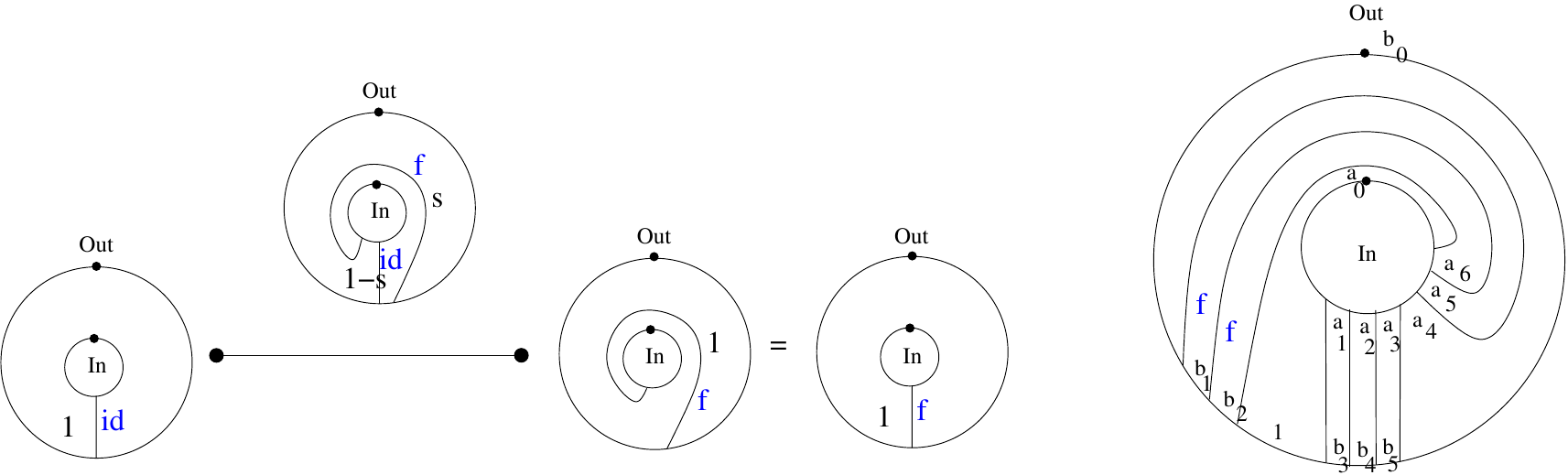}
\caption{\label{animatedfig} The animated BV as a homotopy and one discretized operation. The example is the morphism $a_0\odo a_6\mapsto \eps(a_0) \, a_4\ot f(a_5)\ot f(a_6)\ot a_1\ot a_2\ot a_3$.  Omitting reference to $f$ is the original BV}
\end{figure}

\subsubsection{Dualization to Hochschild chain operations}
\label{KWangpar}
Following discussions with Z.\ Wang, we can also look at the dualization to $CH_*(A,A)$. This allows to
reinterpreting the na\"ive  duality as an additional coloring for the PROP, and includes the products on Hochschild chains as found in \cite{Abb,RivWang} into our package. For this one labels na\"ively dualized inputs and outputs by $ho$ for homological and the ones keeping their original input/output designation as $co$ for cohomological. This yields a bi--colored dg--PROP which acts on Hochschild chains, via the $ho$ color, and Hochschild cochains via the $co$ color. The correlation functions are the {\em old} correlation functions of \cite{hoch2}. The decoration is according the both in/out and ho/co marking. $Co$ outputs and $ho$ inputs are decorated in the induced orientation while $co$ outputs and $ho$ inputs are decorated in the opposite of the induced orientation.

In \cite{KWang}, we furthermore show that the action on the Tate--Hochschild complex \cite{RivWang} can be subsumed into the formalism of  correlation functions \cite{hoch2} by a coloring keeping track of dualizations.
This allows us to realize the homotopy transfer concretely.
We naturally obtains  the operations that are found in \cite{Abb,RivWang}. For instance dualizing the coproduct from co to ho colors for all three boundaries, one recovers the degree $1$, $(2,1)$ product on $CH_*$ given by the formula
\begin{equation}
b_0\odo b_p\cup c_0\odo c_{n-p-1}=\pm \sum b_0\Cp\ot c_1\odo c_{n-p-1}\ot  c_0\Cpp \ot b_1\odo b_p
\end{equation}
which can be read off \eqref{Ycopeq}.

Another upshot is a nice interpretation of the mixed $m_3$ products $CH^*\ot CH_*\ot CH^*\to CH^*, CH_*\ot CH^*\ot CH_*\to CH_*$ in terms of the dualization of a double bracket, which arises as a natural homotopy incorporating the coproduct and its opposite simultaneously and is a Gerstenhaber double bracket
operation  $CH^{\ot 2}\to CH^{\ot 2}$ in the sense of  \cite{turaevbracket,vdBdouble}.
 The operation is given in Figure \ref{Deltahomotopyfig}. The action is given by
 \begin{equation}
 \begin{aligned}
 (a_0\odo a_n)&\ot (b_0\odo b_m)\mapsto
 \sum_{p,q} \pm \la a_p,b_q\ra (\Cpp b_0 \ot a_1\odo a_{p-1}
 \ot b_{q+1}\odo b_m)\\& \ot (\Cp a_0\ot  b_1\odo b_{q-1}\ot a_{p+1}\odo a_n)
 \end{aligned}
\end{equation}

\subsubsection{$A_\infty$--version} In \cite{KSchw} we showed that one can relax the condition of $A$ being associative to $A_\infty$
for the brace operations, aka.\ $A_\infty$--Deligne conjecture, and in \cite{Ward} the same was done tor the BV operators, aka.\ cyclic $A_\infty$ conjecture. As described in \cite{GSA}, this corresponds to introducing diagonals into the non--quadrilateral surfaces to specify an $A_\infty$ version of the OTFT. This type of different theory for the $S_v$ defined by $\a$ can be treated quite generally \cite{ddec2}.
There should be a nontrivial relation to the $A_{\infty}$ case to the   double brackets  above and those of \cite{KontVlass}.

\bibliography{hochnotebib}

\begin{thebibliography}{GCKT20}

\bibitem[Abb16]{Abb}
Hossein Abbaspour.
\newblock On the {H}ochschild homology of open {F}robenius algebras.
\newblock {\em J. Noncommut. Geom.}, 10(2):709--743, 2016.

\bibitem[Abr00]{Abr}
Lowell Abrams.
\newblock The quantum {E}uler class and the quantum cohomology of the
  {G}rassmannians.
\newblock {\em Israel J. Math.}, 117:335--352, 2000.

\bibitem[AW02]{Weibel}
L.~Abrams and C.~Weibel.
\newblock Cotensor products of modules.
\newblock {\em Trans. Amer. Math. Soc.}, 354(6):2173--2185, 2002.

\bibitem[Bas11]{Sbasu}
Somnath Basu.
\newblock {\em Transversal {S}tring {T}opology \& {I}nvariants of {M}anifolds}.
\newblock ProQuest LLC, Ann Arbor, MI, 2011.
\newblock Thesis (Ph.D.)--State University of New York at Stony Brook.

\bibitem[BK17]{BergerKaufmann}
Clemens Berger and Ralph~M. Kaufmann.
\newblock Comprehensive factorisation systems.
\newblock {\em Tbilisi Math. J.}, 10(3):255--277, 2017.

\bibitem[BK21]{ddec2}
Clemens Berger and Ralph~M. Kaufmann.
\newblock {D}erived {D}ecorated {F}eynman {C}ategories: chain aspects.
\newblock {\em In preparation}, 2021.

\bibitem[CHO20]{Oancea}
Kai Cieliebak, Nancy Hingston, and Alexandru Oancea.
\newblock Poinca/e duality for loop spaces.
\newblock arXiv 2008.13161, Aug 2020.

\bibitem[CJ02]{CJ}
Ralph~L. Cohen and John D.~S. Jones.
\newblock A homotopy theoretic realization of string topology.
\newblock {\em Math. Ann.}, 324(4):773--798, 2002.

\bibitem[CK09]{Umkehr}
Ralph~L. Cohen and John~R. Klein.
\newblock Umkehr maps.
\newblock {\em Homology Homotopy Appl.}, 11(1):17--33, 2009.

\bibitem[CS99]{CS}
Moira Chas and Dennis Sullivan.
\newblock String topology.
\newblock {\em preprint arxiv.org/abs/math/9911159}, 99.

\bibitem[GCKT20]{HopfPart1}
Imma G\'alvez-Carrillo, Ralph~M. Kaufmann, and Andrew Tonks.
\newblock Three {H}opf {A}lgebras from {N}umber {T}heory, {P}hysics \&
  {T}opology, and their {C}ommon {B}ackground {I}: operadic \& simplicial
  aspects.
\newblock {\em Comm. in Numb. Th. and Physics}, 14:1--90, 2020.
\newblock arXiv:1607.00196.

\bibitem[Ger63]{Gerstenhaber}
Murray Gerstenhaber.
\newblock The cohomology structure of an associative ring.
\newblock {\em Ann. of Math. (2)}, 78:267--288, 1963.

\bibitem[GH09]{GH}
Mark Goresky and Nancy Hingston.
\newblock Loop products and closed geodesics.
\newblock {\em Duke Math. J.}, 150(1):117--209, 2009.

\bibitem[HW17]{HW}
Nancy Hingston and Nathalie Wahl.
\newblock Products and coproducts in string topology.
\newblock arXiv:1709.06839, 09 2017.

\bibitem[IKV19]{KontVlass}
Natalia Iyudunu, Maxim Kontsevich, and Yannis Vlassopoulos.
\newblock Pre-{C}alabi-{Y}au algebras and double {P}oisson brackets.
\newblock arXiv:1906.07134, 2019.

\bibitem[Jon87]{jones}
John D.~S. Jones.
\newblock Cyclic homology and equivariant homology.
\newblock {\em Invent. Math.}, 87(2):403--423, 1987.

\bibitem[Kau04]{woods}
Ralph~M. Kaufmann.
\newblock Operads, moduli of surfaces and quantum algebras.
\newblock In {\em Woods {H}ole mathematics}, volume~34 of {\em Ser. Knots
  Everything}, pages 133--224. World Sci. Publ., Hackensack, NJ, 2004.

\bibitem[Kau05]{cact}
Ralph~M. Kaufmann.
\newblock On several varieties of cacti and their relations.
\newblock {\em Algebr. Geom. Topol.}, 5:237--300 (electronic), 2005.
\newblock 237-300. ArXiv 0209131.

\bibitem[Kau07a]{hoch1}
Ralph~M. Kaufmann.
\newblock Moduli space actions on the {H}ochschild co-chains of a {F}robenius
  algebra. {I}. {C}ell operads.
\newblock {\em J. Noncommut. Geom.}, 1(3):333--384, 2007.
\newblock arXiv:math/0606064.

\bibitem[Kau07b]{del}
Ralph~M. Kaufmann.
\newblock On spineless cacti, {D}eligne's conjecture and {C}onnes-{K}reimer's
  {H}opf algebra.
\newblock {\em Topology}, 46(1):39--88, 2007.
\newblock ArXiv 0209131.

\bibitem[Kau08a]{hoch2}
Ralph~M. Kaufmann.
\newblock Moduli space actions on the {H}ochschild co-chains of a {F}robenius
  algebra. {II}. {C}orrelators.
\newblock {\em J. Noncommut. Geom.}, 2(3):283--332, 2008.
\newblock arXiv:math/0606065.

\bibitem[Kau08b]{ROMP}
Ralph~M. Kaufmann.
\newblock Noncommutative aspects of open/closed strings via foliations.
\newblock {\em Rep. Math. Phys.}, 61(2):281--293, 2008.

\bibitem[Kau08c]{cyclic}
Ralph~M. Kaufmann.
\newblock A proof of a cyclic version of {D}eligne's conjecture via cacti.
\newblock {\em Math. Res. Lett.}, 15(5):901--921, 2008.
\newblock arXiv: math/0403340.

\bibitem[Kau09a]{postnikov}
Ralph~M. Kaufmann.
\newblock Dimension vs. genus: a surface realization of the little {$k$}-cubes
  and an {$E_\infty$} operad.
\newblock In {\em Algebraic topology---old and new}, volume~85 of {\em Banach
  Center Publ.}, pages 241--274. Polish Acad. Sci. Inst. Math., Warsaw, 2009.
\newblock arXiv:0801.0532.

\bibitem[Kau09b]{GSA}
Ralph~M. Kaufmann.
\newblock Graphs, strings, and actions.
\newblock In {\em Algebra, arithmetic, and geometry: in honor of {Y}u. {I}.
  {M}anin. {V}ol. {II}}, volume 270 of {\em Progr. Math.}, pages 127--178.
  Birkh\"{a}user Boston, Inc., Boston, MA, 2009.

\bibitem[Kau10]{ochoch}
Ralph~M. Kaufmann.
\newblock Open/closed string topology and moduli space actions via open/closed
  {H}ochschild actions.
\newblock {\em SIGMA Symmetry Integrability Geom. Methods Appl.}, 6:Paper 036,
  33, 2010.
\newblock arXiv:0910.5929.

\bibitem[Kau21]{feynmanrep}
Ralph~M. Kaufmann.
\newblock Feynman categories and representation theory.
\newblock {\em Commun. Contemp. Math. to appear preprint arXiv:1911.10169},
  2021.

\bibitem[Kau22]{branetop}
Ralph~M. Kaufmann.
\newblock Branes and derived operations of arcs.
\newblock in preparation, 2022.

\bibitem[KL17]{decorated}
Ralph Kaufmann and Jason Lucas.
\newblock Decorated {F}eynman categories.
\newblock {\em J. Noncommut. Geom.}, 11(4):1437--1464, 2017.
\newblock arXiv:1602.00823.

\bibitem[KLP03]{KLP}
Ralph~M. Kaufmann, Muriel Livernet, and R.~C. Penner.
\newblock Arc operads and arc algebras.
\newblock {\em Geom. Topol.}, 7:511--568 (electronic), 2003.
\newblock arXiv:math/0209132.

\bibitem[Kon94]{konteurope}
Maxim Kontsevich.
\newblock Feynman diagrams and low-dimensional topology.
\newblock In {\em First {E}uropean {C}ongress of {M}athematics, {V}ol.\ {II}
  ({P}aris, 1992)}, volume 120 of {\em Progr. Math.}, pages 97--121.
  Birkh\"auser, Basel, 1994.

\bibitem[KP06]{KP}
Ralph~M. Kaufmann and R.~C. Penner.
\newblock Closed/open string diagrammatics.
\newblock {\em Nuclear Phys. B}, 748(3):335--379, 2006.
\newblock arXiv:math/0603485.

\bibitem[KRW21]{KWang}
Ralph~M. Kaufmann, Manuel Rivera, and Zhengfang Wang.
\newblock Moduli space actions on the {H}ochschild co-chains of a {F}robenius
  algebra. {III}. {A}ctions on the {H}ochschild-{T}ate complex and a
  {G}erstenhaber bibracket.
\newblock In preparation, 2021.

\bibitem[KS10]{KSchw}
Ralph~M. Kaufmann and R.~Schwell.
\newblock Associahedra, cyclohedra and a topological solution to the
  {$A_\infty$} {D}eligne conjecture.
\newblock {\em Adv. Math.}, 223(6):2166--2199, 2010.
\newblock arXiv:0710.3967.

\bibitem[Lod98]{Loday}
Jean-Louis Loday.
\newblock {\em Cyclic homology}, volume 301 of {\em Grundlehren der
  Mathematischen Wissenschaften [Fundamental Principles of Mathematical
  Sciences]}.
\newblock Springer-Verlag, Berlin, second edition, 1998.
\newblock Appendix E by Mar{\'{\i}}a O. Ronco, Chapter 13 by the author in
  collaboration with Teimuraz Pirashvili.

\bibitem[LP08]{LaudaPfeiffer}
Aaron~D. Lauda and Hendryk Pfeiffer.
\newblock Open-closed strings: two-dimensional extended {TQFT}s and {F}robenius
  algebras.
\newblock {\em Topology Appl.}, 155(7):623--666, 2008.

\bibitem[LS08]{LambStan}
Pascal Lambrechts and Don Stanley.
\newblock Poincar\'{e} duality and commutative differential graded algebras.
\newblock {\em Ann. Sci. \'{E}c. Norm. Sup\'{e}r. (4)}, 41(4):495--509, 2008.

\bibitem[Man99]{maninbook}
Yuri~I. Manin.
\newblock {\em Frobenius manifolds, quantum cohomology, and moduli spaces},
  volume~47 of {\em American Mathematical Society Colloquium Publications}.
\newblock American Mathematical Society, Providence, RI, 1999.

\bibitem[Mar16]{Marklnonsigma}
Martin Markl.
\newblock Modular envelopes, {OSFT} and nonsymmetric (non-{$\Sigma$}) modular
  operads.
\newblock {\em J. Noncommut. Geom.}, 10(2):775--809, 2016.

\bibitem[MT17]{turaevbracket}
Gw\'{e}na\"{e}l Massuyeau and Vladimir Turaev.
\newblock Brackets in the {P}ontryagin algebras of manifolds.
\newblock {\em M\'{e}m. Soc. Math. Fr. (N.S.)}, (154):138, 2017.

\bibitem[MW19]{RivWang2}
Rivera Manuel and Zhengfang Wang.
\newblock Invariance of the goresky-hingston algebra on reduced hochschild
  homology,.
\newblock Preprint arXiv:1912.13267, 12 2019.

\bibitem[NW19]{NaefWill}
Florian Naef and Thomas Willwacher.
\newblock String topology and configuration spaces of two points.
\newblock Preprint arXiv:1911.06202, 11 2019.

\bibitem[RW19]{RivWang}
Manuel Rivera and Zhengfang Wang.
\newblock Singular {H}ochschild cohomology and algebraic string operations.
\newblock {\em J. Noncommut. Geom.}, 13(1):297--361, 2019.

\bibitem[Sul01]{Sullconv}
Dennis Sullivan.
\newblock Private communication with d.\ sullivan.
\newblock about a conversation between D.\ Sullivan and K.\ Fukaya in 2001 at
  Sullivan's 60th birthday conference ``Graphs and Patterns in Mathematics and
  Theoretical Physics'', Stony Brook 2001., 2001.

\bibitem[Sul04]{Sopenclosed}
Dennis Sullivan.
\newblock Open and closed string field theory interpreted in classical
  algebraic topology.
\newblock In {\em Topology, geometry and quantum field theory}, volume 308 of
  {\em London Math. Soc. Lecture Note Ser.}, pages 344--357. Cambridge Univ.
  Press, Cambridge, 2004.

\bibitem[Sul05]{Ssigma}
Dennis Sullivan.
\newblock Sigma models and string topology.
\newblock In {\em Graphs and patterns in mathematics and theoretical physics},
  volume~73 of {\em Proc. Sympos. Pure Math.}, pages 1--11. Amer. Math. Soc.,
  Providence, RI, 2005.

\bibitem[Sul07]{Sullivanoverview}
Dennis Sullivan.
\newblock String topology background and present state.
\newblock In {\em Current developments in mathematics, 2005}, pages 41--88.
  Int. Press, Somerville, MA, 2007.

\bibitem[TN]{tsyganbook}
Boris Tsygan and Ryszard Nest.
\newblock Cyclic homology.
\newblock in preparation available at
  https://sites.math.northwestern.edu/~tsygan/.

\bibitem[VdB08a]{vdBbracket}
Michel Van~den Bergh.
\newblock Double {P}oisson algebras.
\newblock {\em Trans. Amer. Math. Soc.}, 360(11):5711--5769, 2008.

\bibitem[VdB08b]{vdBdouble}
Michel Van~den Bergh.
\newblock Double {P}oisson algebras.
\newblock {\em Trans. Amer. Math. Soc.}, 360(11):5711--5769, 2008.

\bibitem[War12]{Ward}
Benjamin~C. Ward.
\newblock Cyclic {$A_\infty$} structures and {D}eligne's conjecture.
\newblock {\em Algebr. Geom. Topol.}, 12(3):1487--1551, 2012.

\end{thebibliography}
\bibliographystyle{../TexInput/halpha}

\end{document}